\theoremstyle{plain}
\newtheorem{theorem}{Theorem}[section]
\newtheorem{corollary}[theorem]{Corollary}
\newtheorem{lemma}[theorem]{Lemma}
\newtheorem{conjecture}[theorem]{Conjecture}
\theoremstyle{definition}
\newtheorem{definition}[theorem]{Definition}
\newtheorem{example}[theorem]{Example}
\theoremstyle{remark}
\newtheorem{openproblem}[theorem]{Open Problem}
\newcommand{\R}{\mathbb{R}}
\newcommand{\C}{\mathbb{C}}
\DeclareMathOperator{\im}{Im}
\DeclareMathOperator{\LP}{\mathcal{LP}}
\begin{document}

\title{Complex zero strip decreasing operators}
\author{David A. Cardon}
\address{Department of Mathematics, Brigham Young University, Provo, UT 84602}
\email{cardon@math.byu.edu}
\date{\today}
\keywords{zeros of entire functions, Laguerre-P\'olya class, complex zero stip decreasing operators}
\subjclass[2010]{30C15, 47B38}

\begin{abstract}
Let $\phi(z)$ be a function in the Laguerre-P\'olya class. Write
$\phi(z)=e^{-\alpha z^2} \phi_1(z)$ where $\alpha \geq 0$ and where
$\phi_1(z)$ is a real entire function of genus $0$ or $1$. Let $f(z)$ be
any real entire function of the form $f(z)=e^{-\gamma z^2}f_1(z)$ where
$\gamma \geq 0$ and $f_1(z)$ is a real entire function of genus $0$ or $1$
having all of its zeros in the strip $S(r)=\{z \in \C \colon -r \leq \im z
\leq r\}$, where $r>0$. If $\alpha\gamma<1/4$, the linear differential
operator $\phi(D)f(z)$, where $D$ denotes differentiation, is known to
converge to a real entire function whose zeros also belong the strip
$S(r)$. We describe several necessary and sufficient conditions on
$\phi(z)$ such that all zeros of $\phi(D)f(z)$ belong to a smaller
strip $S(r_1)=\{z \in \C \colon -r_1 \leq \im z \leq r_1\}$ where $0 \leq
r_1 < r$ and  $r_1$ depends on $\phi(z)$ but is independent of $f(z)$.  We
call a linear operator having this property a \textit{complex zero strip
decreasing operator} or CZSDO. We examine several relevant examples, in
certain cases we give explicit upper and lower bounds for $r'$, and we
state several conjectures and open problems regarding complex zero strip
decreasing operators.
\end{abstract}

\maketitle


\section{Introduction}
An important problem in the theory of the distribution of zeros of a
collection of entire functions is to understand the effect of linear
operators that act on the collection.  It is particularly interesting when
the operators preserve a nice property about the location of the zeros. The
linear operators we will study in this paper are differential operators
$\phi(D)$ where $\phi(z)$ is a function in the Laguerre-P\'olya class and $D$
is differentiation.  If $f(z)$ is a real entire function satisfying
appropriate technical requirements whose zeros belong to the strip $S(r)=\{ z
\in \C \colon -r \leq \im z \leq r\}$, we study the problem of when all zeros
of $\phi(D)f(z)$  belong to a smaller strip $S(r')$ where $0 \leq r' < r$.
The main results in the paper are stated in
Theorems~\ref{theorem:MainTheorem:1} and~\ref{theorem:MainTheorem:2}.

Before stating these theorems we will need a few definitions and a technical
lemma that defines the linear differential operator $\phi(D)$ and tells us
when the expression $\phi(D)f(z)$ makes sense.

\begin{definition}[$\LP$ and $\LP_1$]
The \textit{Laguerre-P\'olya class}, denoted $\LP$, consists of the real
entire functions whose Weierstrass product representations are of the form
\begin{equation} \label{eqn:LaguerrePolyaWeierstrassProduct}
c z^m e^{\alpha z - \beta z^2} \prod_k \left(1 -\frac{z}{\alpha_k} \right) e^{z/\alpha_k},
\end{equation}
where $c$, $\alpha$, $\beta$, $\alpha_k$ are real, $\beta \geq 0$, $m$ is a
nonnegative integer, and $\sum_k |\alpha_k|^{-2} < \infty$.
The subclass $\LP_1$ of $\LP$ consist of those function in $\LP$ with
$\beta=0$ in equation~\eqref{eqn:LaguerrePolyaWeierstrassProduct}.
\end{definition}

The class $\LP$ consists of the entire functions obtained as uniform limits
on compact sets of sequences of real polynomials having only real zeros.  See
Levin~\cite[Thm.~3,~p.~331]{Levin1980}. Motivation for why this class of
functions naturally arises in relation to differential operators is given in
\S\ref{section:philosophy}.

\begin{definition}[$\LP(r)$ and $\LP_1(r)$]
For $r \geq 0$, the \textit{extended Laguerre-P\'olya class}, denoted
$\LP(r)$, consists of the real entire functions having the Weierstrass
product representation in
equation~\eqref{eqn:LaguerrePolyaWeierstrassProduct} except that the zeros
belong to the strip
\[
S(r) = \{z \in \C \colon -r \leq \im z \leq r\}.
\]
Thus, the zeros of a function $f(z) \in \LP(r)$ are either real or occur in
complex conjugate pairs. The subclass $\LP_1(r)$ of $\LP(r)$ consists of
those function in $\LP_1(r)$ with $\beta=0$ in
equation~\eqref{eqn:LaguerrePolyaWeierstrassProduct}.
If $r<0$ or $r$ is imaginary, we define
$\LP(r)=\LP$ and $S(r)=\R$.
\end{definition}

The following lemma shows how functions in $\LP$ define linear differential
operators on functions in $\LP(r)$.  A trivial modification to the proof of a
theorem in Levin~\cite{Levin1980} gives:

\begin{lemma}[Levin~\cite{Levin1980}, Thm.8, p.360]  \label{lemma:differentialoperator}
Assume
\[
\phi(z)=e^{-\gamma_1 z^2}\phi_1(z) =\sum_{k=0}^{\infty} a_k z^k \in
\LP
\]
where $\gamma_1 \geq 0$ and $\phi_1(z)\in\LP_1$. Also let $r\geq 0$ and
assume $f(z)=e^{-\gamma_2 z^2} f_1(z) \in \LP(r)$ where $\gamma_2 \geq 0$ and
$f_1(z)\in\LP_1(r)$. If $\gamma_1 \gamma_2 <1/4$, the linear differential
operator $\phi(D)$ is defined by
\begin{equation} \label{eqn:differentialoperator}
\phi(D)f(z) = \sum_{k=0}^{\infty} a_k f^{(k)}(z),
\end{equation}
where $D$ denoted differentiation. The sum converges uniformly on every
compact subset of $\C$ and $\phi(D)f(z) \in \LP(r)$.
\end{lemma}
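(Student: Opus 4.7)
The plan is to separate the statement into a convergence claim for the series $\sum_k a_k f^{(k)}(z)$ and a zero-location claim, handled respectively by coefficient/derivative growth estimates and by polynomial approximation together with Hurwitz's theorem. For convergence I would exploit that $\phi$ has order at most $2$ with quadratic factor $e^{-\gamma_1 z^2}$: a Hadamard-factorization argument applied to $\phi$ (or, equivalently, Jensen's formula combined with Cauchy's estimates) yields the coefficient bound
\[
\limsup_{k\to\infty} |a_k|^{2/k}\, k \le 2e\gamma_1 .
\]
A parallel Cauchy-estimate argument for $f$, optimized over circles of radius $\sim\sqrt{k/(2\gamma_2)}$ centered at points of a fixed compact set, gives
\[
\sup_{|z|\le R} |f^{(k)}(z)|^{2/k}\, k^{-1} \le 2e\gamma_2 + o(1).
\]
Multiplying, the $k$-th term of the series is dominated by $C_R(4\gamma_1\gamma_2 + \varepsilon)^{k/2}$, which is geometrically summable precisely when $\gamma_1\gamma_2 < 1/4$. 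Uniform convergence on every compact subset of $\C$ follows immediately, and the resulting sum is therefore entire.

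For the location of zeros I would invoke the characterization of $\LP$ as uniform-on-compacta limits of real polynomials with only real zeros, writing $\phi_1 = \lim_n p_n$ and $f_1 = \lim_m q_m$ where $p_n$, $q_m$ are real polynomials whose zeros lie respectively in $\R$ and in $S(r)$, and approximating each Gaussian factor by a polynomial truncation of the form $(1-\gamma z^2/N)^N$. For each fixed collection of approximants, $p_n(D)q_m$ is a finite-order differential operator applied to a polynomial. A strip-preserving form of the Hermite--Poulain theorem places all zeros of $p_n(D)q_m$ in $S(r)$: writing $p_n(D) = c\prod_j (D-\lambda_j)$ with $\lambda_j \in \R$ and using the factorization $(D-\lambda)q(z) = e^{\lambda z}D\bigl[e^{-\lambda z}q(z)\bigr]$ together with the Gauss--Lucas theorem, one checks by induction that each first-order factor $D-\lambda$ preserves the property of having all zeros in $S(r)$. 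The uniform estimates established in the first step justify passing to a diagonal limit of the approximants, and Hurwitz's theorem then forces $\phi(D)f$ to be either identically zero or a function in $\LP(r)$, which matches the claimed conclusion.

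The main obstacle will be the interplay of the two Gaussian factors, since the threshold $\gamma_1\gamma_2 < 1/4$ is sharp and must be respected throughout the pre-limit estimates rather than merely in the limit. I would address this by approximating only the canonical products $\phi_1$ and $f_1$ polynomially, and treating the Gaussians separately via the conjugation identity $e^{-\gamma D^2}e^{\lambda z} = e^{-\gamma \lambda^2}e^{\lambda z}$ and the associated Hermite-kernel convolution representation. This separation isolates the Gaussian contribution (where the sharp constant $1/4$ arises from the product $\gamma_1\gamma_2$) from the robust Hermite--Poulain and Hurwitz machinery applied to the canonical-product parts, and makes visible why the hypothesis $\gamma_1\gamma_2 < 1/4$ is exactly what is needed for both convergence and preservation of the strip.
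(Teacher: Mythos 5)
The paper offers no proof of this lemma at all: it is quoted from Levin (Theorem~8, p.~360) with the remark that only a trivial modification of Levin's argument is required, so there is no internal proof to compare against. Your outline is essentially a reconstruction of that standard argument --- growth estimates on the order-$2$ parts for locally uniform convergence exactly when $\gamma_1\gamma_2<1/4$, then polynomial approximation, a strip form of Hermite--Poulain, and Hurwitz --- and the overall plan is sound, but two details need repair. First, your derivative estimate has the wrong constant: optimizing Cauchy's inequality over circles of radius $\rho\sim\sqrt{k/(2\gamma_2)}$ gives $\sup_{|z|\le R}|f^{(k)}(z)|^{2/k}k^{-1}\le 2\gamma_2/e+o(1)$, not $2e\gamma_2+o(1)$; paired with $\limsup_k k|a_k|^{2/k}\le 2e\gamma_1$ this is what yields the term bound $(4\gamma_1\gamma_2+\varepsilon)^{k/2}$ you assert, whereas the constants as written would only give convergence for $\gamma_1\gamma_2<1/(4e^2)$. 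Second, in the Hermite--Poulain step you cannot apply Gauss--Lucas directly to $e^{-\lambda z}q(z)$, since it is not a polynomial; one must first replace $e^{-\lambda z}$ by $(1-\lambda z/N)^N$, whose only zero $N/\lambda$ is real and hence lies in $S(r)$, conclude from Gauss--Lucas that the critical points of $(1-\lambda z/N)^N q(z)$ stay in the convex hull of $S(r)$-points, and then let $N\to\infty$ with Hurwitz --- this is exactly the heuristic the author spells out in \S\ref{section:philosophy}. With those two corrections, and the usual (fiddly but standard) bookkeeping to justify the diagonal limit over the approximants of $\phi_1$, $f_1$, and the two Gaussian factors while keeping $\gamma_1\gamma_2<1/4$ at every pre-limit stage, your proof matches the cited one.
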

The assumption $\gamma_1 \gamma_2 <1/4$ is essential.
Levin~\cite[p.361]{Levin1980} gives the explicit example
$\phi(z)=e^{-\gamma_1 z^2}$ and $f(z)=e^{-\gamma_2 z^2}$ to show that
$\phi(D)f(z)$ diverges at $z=0$ when $\gamma_1\gamma_2=1/4$.

In the lemma the zeros of $f(z)$ are in the strip $S(r)$ as are the zeros of
$\phi(D)f(z)$.  So, $\phi(D)$ is an operator that preserves the strip $S(r)$
containing the zeros. However, our main interest in this paper is to study the
operators $\phi(D)$ such that the zeros of $\phi(D)f(z)$ belong to a strictly
smaller strip $S(r_1)$ where $0 \leq r_1 < r$.

\begin{definition}[Complex zero strip decreasing operator or CZSDO] \mbox{}  \label{definition:LPtothegammaofr}
\begin{enumerate}
\item[(a)] Given a function $\phi(z)=e^{-\gamma_1 z^2}\phi_1(z) \in \LP$
    where $\phi_1(z) \in \LP_1$, we define $\LP^{\gamma_1}(r)$ to be the
    subclass of $\LP(r)$ of functions of the form $f(z)=e^{-\gamma_2
    z^2}f_1(z)$ where $f_1(z) \in \LP_1(r)$ and $\gamma_2$ is any nonnegative
    real number such that $\gamma_1 \gamma_2 < 1/4$.
\item[(b)] The linear differential operator $\phi(D)$ in part~(a) is called a
    \textit{complex zero strip decreasing operator} if for each $r>0$ there
    exists a corresponding $r_1$  with $0 \leq r_1 < r$ such that
    $\phi(D)f(z) \in \LP(r_1)$ for all $f(z) \in \LP^{\gamma_1}(r)$. For
    short, we will say $\phi(D)$ is a CZSDO.
\end{enumerate}
\end{definition}

Lemma~\ref{lemma:differentialoperator} implies $\phi(D)f(z) \in \LP(r)$ for
all $f(z)\in\LP^{\gamma_1}(r)$, which is why we defined $\LP^{\gamma_1}(r)$
in Definition~\ref{definition:LPtothegammaofr}(a).

In this paper we will prove two main theorems.
Theorem~\ref{theorem:MainTheorem:1} provides a sufficient condition for
$\phi(D)$ to be a CZSDO. Theorem~\ref{theorem:MainTheorem:2} gives a
necessary condition for $\phi(D)$ to be a CZSDO.

\begin{theorem} \label{theorem:MainTheorem:1}
Assume $\phi(z)=e^{-\alpha^2 z^2/2}\phi_1(z)$ where $\phi_1(z) \in \LP_1$ and
$\alpha>0$. If $f(z) \in \LP^{\alpha^2/2}(r)$, then $\phi(D)f(z) \in
\LP(\sqrt{r^2-\alpha^2})$. Therefore $\phi(D)$ is a CZSDO.
\end{theorem}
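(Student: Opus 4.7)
The plan is to factor the operator as $\phi(D) = e^{-\alpha^2 D^2/2}\phi_1(D)$, reduce the theorem to the pure Gaussian case with $\phi_1 = 1$ and no Gaussian factor on $f$, and then invoke a de Bruijn-type Gaussian integral representation to establish the strip-shrinking property.

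First, I would apply $\phi_1(D)$ to $f$ and set $g := \phi_1(D) f$. Since $\phi_1 \in \LP_1$ has Gaussian parameter zero, Lemma~\ref{lemma:differentialoperator} gives $g \in \LP(r)$ unconditionally. The identity $D^n(e^{-\gamma_2 z^2} f_1) = e^{-\gamma_2 z^2}(D - 2\gamma_2 z)^n f_1$, applied termwise to the defining series of $\phi_1(D)$, together with a routine Gaussian-growth estimate, shows that $g \in \LP^{\alpha^2/2}(r)$ and $\phi(D) f = e^{-\alpha^2 D^2/2} g$. It therefore suffices to prove that $e^{-\alpha^2 D^2/2}$ maps $\LP^{\alpha^2/2}(r)$ into $\LP(\sqrt{r^2 - \alpha^2})$.

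Next I would establish the Gaussian integral representation
\[
e^{-\alpha^2 D^2/2} g(z) = \frac{1}{\alpha\sqrt{2\pi}} \int_{-\infty}^\infty e^{-t^2/(2\alpha^2)}\, g(z + it)\, dt
\]
by a termwise Taylor expansion of $g(z+it)$ in $t$ and use of Gaussian moments, with the interchange of sum and integral justified by the hypothesis $\gamma_2 \cdot \alpha^2/2 < 1/4$. Writing $g = e^{-\gamma_2 z^2} g_1$ with $g_1 \in \LP_1(r)$ and completing the square in the exponent transforms this into
\[
e^{-\alpha^2 D^2/2}\bigl(e^{-\gamma_2 z^2} g_1(z)\bigr) = \frac{e^{-\tilde\gamma z^2}}{\sqrt{1 - 2\alpha^2 \gamma_2}} \bigl(e^{-\tilde\alpha^2 D^2/2} g_1\bigr)\!\bigl(z/(1 - 2\alpha^2\gamma_2)\bigr),
\]
where $\tilde\alpha^2 = \alpha^2/(1 - 2\alpha^2\gamma_2) \geq \alpha^2$ and $\tilde\gamma = \gamma_2/(1-2\alpha^2\gamma_2)$. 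Combined with the elementary inequality $(1-2\alpha^2\gamma_2)\sqrt{r^2 - \tilde\alpha^2} \leq \sqrt{r^2 - \alpha^2}$, this reduces the theorem to the clean special case: for $h \in \LP_1(r)$ and $\beta > 0$, show $e^{-\beta^2 D^2/2} h \in \LP(\sqrt{r^2 - \beta^2})$.

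The hard part will be this last step. I would handle it by approximating $h$ uniformly on compact sets by real polynomials $p_n$ whose zeros all lie in $S(r)$, proving the conclusion for each $p_n$, and then passing to the limit via Hurwitz's theorem. The base case $p(z) = (z - a)^2 + b^2$ with $|b| \leq r$ is explicit: $e^{-\beta^2 D^2/2} p(z) = (z - a)^2 + (b^2 - \beta^2)$, with zeros at $a \pm i\sqrt{b^2 - \beta^2}$, lying in $S(\sqrt{r^2 - \beta^2})$ or on $\R$. Extending this to an arbitrary polynomial in $\LP(r)$ is the classical theorem of de~Bruijn; its proof, via a continuous deformation of the zeros of $p$ within $S(r)$ combined with a Hermite-Biehler-style contour shift in the integral representation to rule out any zero of the image escaping $S(\sqrt{r^2 - \beta^2})$, is the principal technical obstacle of the entire argument.
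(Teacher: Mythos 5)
Your reductions are sound and in places nicer than what the paper does --- the Gaussian integral representation of $e^{-\alpha^2D^2/2}$, the completing-the-square identity that absorbs the factor $e^{-\gamma_2 z^2}$ of $f$ into a rescaled operator $e^{-\tilde\alpha^2D^2/2}$ acting on $g_1\in\LP_1(r)$, and the inequality $(1-2\alpha^2\gamma_2)\sqrt{r^2-\tilde\alpha^2}\leq\sqrt{r^2-\alpha^2}$ all check out (the last one because $r^2\geq\tilde\alpha^2$ may be assumed, the degenerate cases being trivial). But the proof has a genuine gap exactly where you say it does: the claim that $e^{-\beta^2D^2/2}$ maps $\LP_1(r)$ into $\LP(\sqrt{r^2-\beta^2})$ is the entire content of the theorem, and you do not prove it. Computing the quadratic case $(z-a)^2+b^2\mapsto(z-a)^2+b^2-\beta^2$ does not extend to products of quadratics, since the operator is not multiplicative, and the sentence ``continuous deformation of the zeros combined with a Hermite--Biehler-style contour shift'' is not an argument one could execute from what you have written. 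Deferring the principal technical obstacle to an uncited ``classical theorem of de Bruijn'' leaves the proof incomplete.

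For comparison, the paper closes this gap with two concrete steps. First (Lemma~\ref{lemma:cosineandsineoperators}) it shows $\cos(\alpha D+\beta)f(z)=\tfrac12\big(e^{i\beta}f(z+i\alpha)+e^{-i\beta}f(z-i\alpha)\big)\in\LP(\sqrt{r^2-\alpha^2})$ by a direct comparison: if $z=x+iy$ with $y>0$ and $y^2>r^2-\alpha^2$, then every factor in the Weierstrass product expansion of $|f(z-i\alpha)|^2$ is at most the corresponding factor of $|f(z+i\alpha)|^2$, with at least one strict inequality (the key computation being that the contribution of a conjugate pair of zeros $s_n\pm it_n$ differs by $8\alpha y\big((x-s_n)^2+y^2-t_n^2+\alpha^2\big)>0$), so the two terms cannot cancel and $z$ cannot be a zero. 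Second (Lemma~\ref{lemma:exponentialsquared}) it uses $\lim_{n\to\infty}\big(\cos(\tfrac{\alpha z}{n})\big)^{n^2}=e^{-\alpha^2z^2/2}$: applying the cosine lemma $n^2$ times shrinks the strip from $r$ to $\sqrt{r^2-n^2\cdot\tfrac{\alpha^2}{n^2}}=\sqrt{r^2-\alpha^2}$, and Hurwitz passes this to the limit. If you want to keep your integral-representation framework, you would still need to supply an argument of comparable substance at this point; as it stands, your writeup establishes the easy outer layers and leaves the core unproved.
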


\begin{theorem} \label{theorem:MainTheorem:2} \mbox{}
\begin{enumerate}
\item[(a)] If $\phi(z) \in \LP_1$ has order $\rho<1$, then $\phi(D)$ is
    \textit{not} a CZSDO.
\item[(b)] If $\phi(z) \in \LP_1$ has order $\rho=1$ but is of minimal type,
    then $\phi(D)$ is \textit{not} a CZSDO.
\item[(c)] If $\phi(z)=e^{\alpha z}$ where $\alpha \in \R$ and $f(z) \in
    \LP(r)$,  then $\phi(D)f(z)=f(z+\alpha)$. Hence $\phi(D)$ is not a CZSDO.
\end{enumerate}
Therefore, a necessary condition for $\phi(D)\in \LP_1 $ to be a CZSDO is:
If the Weierstrass canonical product for $\phi(z)$ is
\[
\phi(z) = c z^m e^{\alpha z} \prod_{n} (1-z/a_n)e^{z/a_n},
\]
then the product
\[
c z^m \prod_{n} (1-z/a_n)e^{z/a_n},
\]
with the term $e^{\alpha z}$ omitted, has order $\rho=1$ and type $\sigma>0$
or has order $\rho>1$.
\end{theorem}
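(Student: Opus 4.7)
Part (c) is immediate from Taylor's theorem: $e^{\alpha D}f(z)=\sum_{k\ge 0}\alpha^k f^{(k)}(z)/k!=f(z+\alpha)$, a translation preserving the imaginary parts of zeros. Applying this to $f(z)=z^2+r^2\in\LP(r)$, whose zeros $\pm ir$ lie on $\partial S(r)$, produces $(z+\alpha)^2+r^2$ whose zeros $-\alpha\pm ir$ still have imaginary part $\pm r$, so no strictly smaller strip can contain them.

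For parts (a) and (b) I will use the one-parameter test family $f_c(z)=(z^2+r^2)e^{cz}$, $c\in\R$, which lies in $\LP_1(r)\subset\LP^{\gamma_1}(r)$. The operator identity $\phi(D)[e^{cz}g(z)]=e^{cz}\phi(D+c)g(z)$ applied to $g(z)=z^2+r^2$ yields
\[
\phi(D)f_c(z)=e^{cz}\bigl[\phi(c)(z^2+r^2)+2\phi'(c)z+\phi''(c)\bigr],
\]
whose bracketed quadratic has non-real zeros with imaginary parts $\pm r_1(c)$ where $r_1(c)^2=r^2+(\log\phi)''(c)$. Differentiating the Weierstrass product for $\phi\in\LP_1$ twice (the linear contributions from $\alpha z$ and $z/a_n$ cancel) gives
\[
(\log\phi)''(c)=-\frac{m}{c^2}-\sum_n\frac{1}{(c-a_n)^2}\le 0,
\]
so (a) and (b) reduce to showing $\inf_{c\in\R}\bigl[-(\log\phi)''(c)\bigr]=0$: then $r_1(c)$ may be pushed arbitrarily close to $r$ and no uniform $r_1<r$ can contain the zeros of $\phi(D)f_c$ for every admissible $c$.

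The key estimate proceeds by an averaging argument. Fix a large $R>0$, let $N(R)=\#\{n:|a_n|\le R\}$, and choose $\delta\asymp R/N(2R)$ so that $E_\delta=\{c\in[R,2R]:\min_n|c-a_n|\ge\delta\}$ has Lebesgue measure at least $R/2$. Split $\int_{E_\delta}\sum_n 1/(c-a_n)^2\,dc$ by the location of $a_n$: zeros with $|a_n|\le R/2$ are handled by $|c-a_n|\ge R/2$; zeros with $|a_n|\ge 3R$ by $|c-a_n|\gtrsim|a_n|$ together with the convergence of $\sum 1/a_n^2$; and the middle range $R/2\le|a_n|\le 3R$ contributes at most $2N(3R)/\delta$. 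In case (a), $N(R)=O(R^{\rho+\varepsilon})$ with $\rho<1$ makes the average over $E_\delta$ of size $O(R^{2\rho+2\varepsilon-2})\to 0$. In case (b), Jensen's formula applied to $\log M(R,\phi)=o(R)$ forces $N(R)=o(R)$, whence the middle contribution is $O(N(3R)N(2R)/R)$ and the average is $o(1)$. Chebyshev's inequality then produces $c\in E_\delta$ with $-(\log\phi)''(c)$ arbitrarily small. The principal obstacle is controlling the middle-range term, which requires the precise balance $\delta\asymp R/N(2R)$ between zero proximity and zero count.

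For the concluding necessary condition, write $\phi(z)=c_0 z^m e^{\alpha z}\prod_n(1-z/a_n)e^{z/a_n}$ and $\psi(z)=c_0 z^m\prod_n(1-z/a_n)e^{z/a_n}$, so $\phi=e^{\alpha z}\psi$ and therefore $\phi(D)=e^{\alpha D}\psi(D)$, giving $\phi(D)f(z)=[\psi(D)f](z+\alpha)$, a translate of $\psi(D)f$. Translations preserve the strip containing the zeros, so $\phi(D)$ is a CZSDO if and only if $\psi(D)$ is. By (a) and (b) applied to $\psi$, the CZSDO property forces $\psi$ to have order $\rho>1$, or order $\rho=1$ with type $\sigma>0$, which is the stated necessary condition.
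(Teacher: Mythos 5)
Your proposal is correct, but it takes a genuinely different route from the paper. The paper first reduces to even $\phi$ via $\phi(D)\phi(-D)$, then lets $\phi(D)$ act on the trigonometric test functions $f_a(z)=\cos(a(z-ir))\cos(a(z+ir))$ (and on $\cos^2(a(z-ir))\cos^2(a(z+ir))$ when $\phi(0)=0$), using the eigenfunction identity $\phi(D)e^{az}=\phi(a)e^{az}$ to reduce everything to comparing $\cosh(2ar)$ with $M_\phi(2a)=\phi(2ai)$; the growth hypotheses on $\phi$ along the imaginary axis then force $r_1=\frac{1}{2a}\cosh^{-1}\big(\cosh(2ar)/\phi(2ai)\big)\to r$. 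You instead use the family $(z^2+r^2)e^{cz}$ and the shift identity, which localizes the obstruction at a real point $c$: the narrowing of the strip is exactly $-(\log\phi)''(c)=\frac{m}{c^2}+\sum_n(c-a_n)^{-2}$, and you kill it by a zero-density/averaging argument. This buys several things: parts (a) and (b) are unified under the single hypothesis $n(t)=o(t)$ (which Jensen's formula extracts from either growth assumption), the even-reduction and the $\phi(0)=0$ case split disappear, and the argument dovetails with the paper's Section 5 computation of $\phi(aD)(z^2+r^2)$ --- you are in effect showing that a CZSDO must have $-(\log\phi)''$ bounded away from $0$ on $\R$. The cost is the Jensen step plus the measure-theoretic excision, whereas the paper's estimates are entirely explicit and even quantify the gap $r-r_1$. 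Two details you should make explicit when writing this up: the formula $r_1(c)^2=r^2+(\log\phi)''(c)$ presupposes a negative discriminant, i.e.\ $-(\log\phi)''(c)<r^2$, which does hold for the $c$ you select but not a priori for all $c$; and the excision width should be calibrated to $N(3R)$ (the full middle range) rather than $N(2R)$ --- harmless since both are $o(R)$, but as written the measure bound on $E_\delta$ doesn't quite match the range you excise. Your treatment of part (c) and of the concluding necessary condition (factoring $\phi=e^{\alpha z}\psi$ and noting that $e^{\alpha D}$ is a translation) agrees with the paper's.
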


Note that since functions in $\LP_1$ and $\LP_1(r)$ have Weierstrass
canonical products of genus $g=0$ or $g=1$ and since the genus is related to
the order $\rho$ by $g \leq \rho \leq g+1$, the order of any of these
functions satisfies $0 \leq \rho \leq 2$.  So, in
Theorem~\ref{theorem:MainTheorem:2} the only relevant orders satisfy $1 \leq
\rho \leq 2$.

Theorem~\ref{theorem:MainTheorem:1} and~\ref{theorem:MainTheorem:2} are
important in the context of the following general problem: If $\Omega
\subseteq \C$ is a set of particular interest and if $\pi(\Omega)$ is the
class of all (real or complex) univariate polynomials whose zeros lie only
in $\Omega$, then characterize the linear transformations $T \colon
\pi(\Omega) \rightarrow \pi(\Omega)\cup\{0\}$. Furthermore, if
$\pi_n(\Omega)$ is the subclass of polynomials in $\pi(\Omega)$ of degree at
most~$n$, then characterize the linear transformations $T \colon
\pi_n(\Omega) \rightarrow \pi(\Omega)\cup\{0\}$.

Recently, Borcea and Br\"and\'en~\cite{BorceaBranden2009c} solved these
problems in the case when $\Omega$ is a line, a circle, a closed half-plane, a
closed disk, or the complement of an open disk. They gave several different
types of descriptions that all linear operators having these zero preserving
properties must satisfy.

An unsolved case of this problem is when $\Omega=S(r)$ is a strip in the
complex plane, the case studied in this paper. An operator $T$ on real
polynomials in $\pi(S(r))$ with the CZSDO property clearly satisfies $T
\colon \pi(\R) \rightarrow \pi(\R)\cup\{0\}$. However, simple examples show
that converse is false. A full characterization of CZSDOs in the style of
Borcea and Br\"and\'en as in~\cite{BorceaBranden2009c} would require some
kind of modification to their description. The linear operators in
Theorem~\ref{theorem:MainTheorem:1}, Theorem~\ref{theorem:MainTheorem:2}, and
Conjecture~\ref{conjecture:MainConjecture} (stated later in
\S\ref{section:examples}) give a large class of explicit examples of linear
operators that preserve the strip $S(r)$, but better yet, are complex zero
strip decreasing operators. Conjecture~\ref{conjecture:MainConjecture} (if
true) would give a complete classification of CZSDOs of the form $\phi(D)$
where $\phi(z) \in \LP$.

Our results are related to but different from those of Craven and
Csordas~\cite{Craven1995} in which they studied linear transformations $T$ on
real polynomials $p(x)$ such that the number of complex zeros of $T[p(x)]$ is
less than or equal to the number of complex zeros of $p(x)$.   In their
enjoyable survey article~\cite{Craven2004}, Craven and Csordas explain many
interesting results pertaining to operators that preserve reality of zeros.

The rest of the paper is organized as follows: In \S\ref{section:philosophy},
we explain a heuristic to help motivate the context for this paper.  In
\S\ref{section:proof:MainTheorem:1}, we prove
Theorem~\ref{theorem:MainTheorem:1}. In \S\ref{section:proof:MainTheorem:2}, we proof
Theorem~\ref{theorem:MainTheorem:2}. In \S\ref{section:examples}, we give
several examples and conjectures. Finally, in \S\ref{section:futherstudy}, we
suggest questions for further study on this topic.

\section{Some philosophy and heuristics}  \label{section:philosophy}
Much of the discussion in this paper becomes significantly more intuitive if
one keeps in mind the following fundamental fact:

\begin{theorem}[Gauss-Lucas]
Every convex set containing all the zeros of a polynomial also contains all
of its critical points.
\end{theorem}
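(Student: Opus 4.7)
The plan is to prove the stronger statement that every critical point of $p(z)$ lies in the \emph{convex hull} of its zeros; since the convex hull is the intersection of all convex sets containing the zeros, this immediately yields the theorem. First I would factor $p(z) = c \prod_{k=1}^{n}(z - z_k)$ and pass to the logarithmic derivative
\[
\frac{p'(z)}{p(z)} = \sum_{k=1}^{n} \frac{1}{z - z_k},
\]
which converts a polynomial identity about $p'$ into a rational identity governed purely by the geometry of the zeros.

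Next I would fix a critical point $\zeta$ (so $p'(\zeta) = 0$) and split into cases. If $p(\zeta) = 0$, then $\zeta$ is one of the $z_k$ and lies trivially in the convex hull. Otherwise $p(\zeta) \neq 0$, so the identity above forces $\sum_{k=1}^{n}(\zeta - z_k)^{-1} = 0$. The key manipulation is to conjugate this equation and use
\[
\overline{(\zeta - z_k)^{-1}} = \frac{\zeta - z_k}{|\zeta - z_k|^2},
\]
which introduces \emph{real positive} weights $\lambda_k := |\zeta - z_k|^{-2}$ and yields
\[
\sum_{k=1}^{n} \lambda_k (\zeta - z_k) = 0.
\]
Solving for $\zeta$ gives $\zeta = \bigl(\sum_k \lambda_k z_k\bigr) / \bigl(\sum_k \lambda_k\bigr)$, an honest convex combination of the zeros.

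Since $\zeta$ then lies in the convex hull of $\{z_1, \dots, z_n\}$, any convex set containing the zeros must also contain $\zeta$, completing the proof. The only nontrivial step is the conjugation trick that turns the complex identity $\sum (\zeta - z_k)^{-1} = 0$ into a relation with real positive coefficients; once this move is seen, the rest is routine, and I would not expect any further obstacle.
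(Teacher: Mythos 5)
Your proof is correct and complete: the logarithmic-derivative identity, the conjugation trick producing the positive weights $\lambda_k = |\zeta - z_k|^{-2}$, and the resulting convex combination $\zeta = \bigl(\sum_k \lambda_k z_k\bigr)/\bigl(\sum_k \lambda_k\bigr)$ constitute the standard argument, and you handle the degenerate case $p(\zeta)=0$ correctly. The paper does not prove this theorem itself but only cites Rahman and Schmeisser, where essentially this same proof appears, so there is nothing to reconcile.
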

Proofs can be found in many places but we especially like the treatise on the
analytic theory of polynomials by Rahman and
Schmeisser~\cite{RahmanSchmeisser2002}.

This theorem provides a natural strategy for constructing examples of linear
operators with particular zero preserving features.  As an example of this
approach, the differential linear operator $\phi(D)$ in
Theorems~\ref{theorem:MainTheorem:1} and~\ref{theorem:MainTheorem:2} is quite
natural as follows:  Suppose $f$ is a real polynomial with zeros in $S(r)$,
and let $\alpha$ be real and nonzero. If $I$ is the identity operator and $D$
is differentiation,
\[
\Big(I-\frac{D}{\alpha}\Big)f(z) =f(z)-\frac{f'(z)}{\alpha}
=
- \frac{e^{\alpha z}}{\alpha} \frac{d}{dz}\left(e^{-\alpha z} f(z)\right) .
\]
The zeros of $f(z)-f'(z)/\alpha$ are those of $\tfrac{d}{dz}\big(e^{-\alpha
z}f(z)\big)$. By considering the Gauss-Lucas Theorem and the approximation
$e^{-\alpha z} \approx (1-\tfrac{\alpha z}{n})^n$, we see that the zeros of
$f(z)-f'(z)/\alpha$  approximately belong to the convex hull of the zeros of
$f(z)$ and the real number $n/\alpha$ which is the only root of $(1-\alpha
z/n)^n$. But since the roots of $f(z)$ and the root $n/\alpha$ are in $S(r)$,
this convex hull lies inside the strip $S(r)$ as well. Taking the limit shows
that the roots of $f(z)-f'(z)/\alpha$  belong to $S(r)$. We conclude that if
$\phi(z)=z^m \prod_{k=1}^n (1-z/\alpha_k)$ is a polynomial in which all
$\alpha_k$ are real, then the zeros of $\phi(D)f(z)$ belong to the strip
$S(r)$.  By taking slightly more care, we can extend both $\phi$ and $f$ to
be entire functions that are sufficiently nice limits of sequences of
polynomials (thus obtaining Lemma~\ref{lemma:differentialoperator}). Hence,
we obtain the case $\phi \in \LP$ and $f \in \LP(r)$, which is the main focus
of this paper.

Applications of this general strategy produce a wide variety of interesting
facts about zeros of polynomials.  A good reference is Chapter 5
of~\cite{RahmanSchmeisser2002}.

\section{Proof of Theorem~\ref{theorem:MainTheorem:1}}
\label{section:proof:MainTheorem:1}

Assume $\phi(z)=e^{-\alpha^2 z^2/2} \phi_1(z)$ where $\phi_1(z) \in \LP_1$
and let $f(z) \in \LP^{\alpha^2/2}(r)$ for $r>0$. We will show that
$\exp\big(-\tfrac{\alpha^2 D^2}{2})f(z) \in \LP(\sqrt{r^2-\alpha^2})$. Then
since $\phi_1(D)$ preserves $\LP(\sqrt{r^2-\alpha^2})$, it will follow that
$\phi(D)f(z) \in \LP(\sqrt{r^2-\alpha^2})$.

To understand the effect of the differential
operator $\exp(-\frac{\alpha^2 D^2}{2})$ where $\alpha \geq 0$, we first
consider some simpler exponential operators.

\begin{lemma}[Shifting Operator]  \label{lemma:shiftingoperator}
Let $\beta$ be any complex number and $n$ be a nonnegative integer. Then
$\exp(\beta D)z^n = (z+\beta)^n$. Consequently, if $f \in \LP(r)$ for $r \geq
0$, then $\exp(\beta D) f(z) = f(z+\beta)$.
\end{lemma}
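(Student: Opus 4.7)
My plan is to prove the two assertions in sequence. First, for monomials $z^n$, I would apply $\exp(\beta D)$ term by term and observe that $D^k z^n = \frac{n!}{(n-k)!} z^{n-k}$ when $k \le n$ and vanishes otherwise. This reduces the a priori infinite series $\sum_{k \ge 0} \beta^k D^k z^n / k!$ to the finite sum
\[
\sum_{k=0}^{n} \binom{n}{k} \beta^k z^{n-k} = (z+\beta)^n
\]
by the binomial theorem. Since the series truncates, no convergence question arises and $\beta$ is allowed to be an arbitrary complex number.

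For the second assertion I would invoke Taylor's theorem. Every $f \in \LP(r)$ is entire, so its Taylor expansion about any point $z \in \C$ has infinite radius of convergence. Consequently
\[
f(z+\beta) = \sum_{k=0}^{\infty} \frac{f^{(k)}(z)}{k!}\, \beta^k
\]
for all $z,\beta \in \C$, with uniform convergence on compact subsets of $\C \times \C$. The right-hand side is exactly $\exp(\beta D) f(z)$ when the exponential differential operator is interpreted formally as $\sum_{k \ge 0} \beta^k D^k / k!$. One could alternatively derive this from the monomial case by writing $f$ as a uniform limit on compacta of its partial Taylor sums and passing to the limit using linearity, but the direct Taylor-series argument is cleaner.

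A small subtlety I would address explicitly is that Lemma~\ref{lemma:differentialoperator} defines $\phi(D)$ only when $\phi \in \LP$, whereas for non-real $\beta$ the function $e^{\beta z}$ is not a member of $\LP$. I would therefore note that in this lemma the symbol $\exp(\beta D) f(z)$ is understood as the formal Taylor series in $D$ displayed above, whose convergence and identification with $f(z+\beta)$ rest only on Taylor's theorem for entire functions. Since the statement is essentially Taylor's theorem recast as an operator identity, there is no serious obstacle; the main task is just careful bookkeeping in the monomial computation and clarifying the complex-$\beta$ generality.
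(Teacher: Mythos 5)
Your proof is correct, and the monomial computation is identical to the paper's. Where you diverge is in extending from polynomials to all of $\LP(r)$: the paper simply says the identity ``holds for polynomials'' and then extends ``by taking limits of sequences of polynomials,'' whereas you bypass the approximation argument entirely by observing that for an entire function $f$ the series $\sum_{k\ge 0} f^{(k)}(z)\beta^k/k!$ \emph{is} the Taylor expansion of $f$ about $z$ evaluated at $z+\beta$, which converges everywhere. Your route is arguably the cleaner of the two: the limit argument, done carefully, requires justifying the interchange of $\lim_{n}$ with the infinite sum over $k$ (the $k$-th derivatives of the approximating polynomials converge locally uniformly by Cauchy's estimates, but one still needs a uniform tail bound to pass the limit through the series), and Taylor's theorem delivers the identity with no such bookkeeping. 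Your added remark that $e^{\beta z}\notin\LP$ for nonreal $\beta$, so that $\exp(\beta D)$ here is interpreted directly as the series $\sum_{k\ge 0}\beta^k D^k/k!$ rather than through Lemma~\ref{lemma:differentialoperator}, is a genuine point the paper leaves implicit (it applies the lemma with $\beta=\pm i\alpha$ in the corollary on the cosine and sine operators) and is worth making explicit.
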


\begin{proof}
By a simple calculation
\begin{align*}
\exp(\beta D)z^n & = \sum_{k=0}^{\infty} \frac{\beta^k}{k!} \frac{d^k}{dz^k} z^n
= \sum_{k=0}^n \frac{\beta^k}{k!} n(n-1)(n-2)\cdots(n-k+1) z^{n-k} \\
& = \sum_{k=0}^n \binom{n}{k} \beta^k z^{n-k} = (z+\beta)^n.
\end{align*}
Hence, $\exp(\beta D)f(z)=f(z+\beta)$ holds whenever $f(z)$ is a polynomial.
By taking limits of sequences of polynomials, the result holds for functions
in the class $\LP(r)$.
\end{proof}

\begin{corollary}
Let $\alpha,\beta\in \R$ where $\alpha>0$ and let $D$ denote differentiation. Since
\[
\cos(\alpha z+\beta)= \frac{1}{2}\big(e^{(\alpha z+\beta)i}+e^{-(\alpha z+\beta)i}\big)
\]
and
\[
\sin(\alpha z+\beta)= \frac{1}{2i}\big(e^{(\alpha z+\beta)i}-e^{-(\alpha z+\beta)i}\big),
\]
it follows immediately that for any $f(z) \in \LP(r)$
\[
\cos(\alpha D+\beta)f(z)=\tfrac{1}{2}\big(e^{i\beta}f(z+i\alpha)+e^{-i\beta}f(z-i\alpha)\big)
\]
and
\[
\sin(\alpha D+\beta)f(z)=\tfrac{1}{2i}\big(e^{i\beta}f(z+i\alpha)-e^{-i\beta}f(z-i\alpha)\big).
\]
\end{corollary}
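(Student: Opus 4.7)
The plan is to combine Lemma~\ref{lemma:shiftingoperator} with the trigonometric identities already displayed in the statement. First I would verify that the operators $\cos(\alpha D+\beta)$ and $\sin(\alpha D+\beta)$ do act on $\LP(r)$ via Lemma~\ref{lemma:differentialoperator}: both $\cos(\alpha z+\beta)$ and $\sin(\alpha z+\beta)$ lie in $\LP_1$ (they are real entire functions with only real zeros and no Gaussian factor in their Weierstrass products), so in the notation of Lemma~\ref{lemma:differentialoperator} we have $\gamma_1=0$ and the hypothesis $\gamma_1\gamma_2<1/4$ is automatic. The series defining $\cos(\alpha D+\beta)f(z)$ and $\sin(\alpha D+\beta)f(z)$ therefore converge uniformly on compact subsets of $\C$.

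Next I would translate the scalar decompositions shown in the statement into operator identities by substituting $z \mapsto D$ and comparing Taylor coefficients:
\[
\cos(\alpha D+\beta) = \tfrac{1}{2}\bigl(e^{i\beta}e^{i\alpha D}+e^{-i\beta}e^{-i\alpha D}\bigr), \qquad \sin(\alpha D+\beta) = \tfrac{1}{2i}\bigl(e^{i\beta}e^{i\alpha D}-e^{-i\beta}e^{-i\alpha D}\bigr).
\]
The coefficient of $D^k$ on each side agrees by the usual Taylor expansions of cosine and sine, and since both series on each right-hand side converge uniformly on compacta when applied to $f$, termwise addition (respectively subtraction) is legitimate.

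Finally I would invoke Lemma~\ref{lemma:shiftingoperator} with the complex shifts $\pm i\alpha$ to obtain $e^{\pm i\alpha D}f(z)=f(z\pm i\alpha)$. Substituting these into the two operator identities above immediately yields the asserted formulas. The only point that demands any care is that the shift in Lemma~\ref{lemma:shiftingoperator} may be purely imaginary; this is already built into the lemma's statement (any complex $\beta$ is allowed), because the polynomial identity $e^{\beta D}z^n=(z+\beta)^n$ holds for every $\beta\in\C$ and the extension to $\LP(r)$ is by uniform limits on compact sets. I do not expect any substantive obstacle; the argument is essentially bookkeeping, which is no doubt why the author writes that the formulas follow ``immediately.''
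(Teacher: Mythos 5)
Your proposal is correct and follows exactly the route the paper intends: the corollary carries its own one-line justification (decompose $\cos$ and $\sin$ into exponentials and apply the shifting operator Lemma~\ref{lemma:shiftingoperator} with the complex shifts $\pm i\alpha$), and your argument is the same, just with the convergence bookkeeping spelled out.
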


\begin{lemma}[Effect of Cosine and Sine Operators] \label{lemma:cosineandsineoperators}
Let $\alpha,\beta\in \R$ where $\alpha>0$, let $D$ denote differentiation, and let
$f(z)\in \LP(r)$ where $r \geq 0$. Then
\[
\cos(\alpha D+\beta)f(z) \in \LP(\sqrt{r^2-\alpha^2})
\]
and
\[
\sin(\alpha D+\beta)f(z) \in \LP(\sqrt{r^2-\alpha^2}).
\]
\end{lemma}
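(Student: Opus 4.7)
The plan is to use the corollary just proven to rewrite $\cos(\alpha D+\beta)f$ and $\sin(\alpha D+\beta)f$ in terms of the shifted functions $f(z\pm i\alpha)$, and then to show, by a factor-by-factor analysis of the Weierstrass product of $f$, that $|f(z+i\alpha)|\ne|f(z-i\alpha)|$ for every $z$ outside the strip $S(\sqrt{r^2-\alpha^2})$.

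Set $g(z)=e^{i\beta}f(z+i\alpha)$; since $f$ is real entire, $g^*(z):=\overline{g(\bar z)}=e^{-i\beta}f(z-i\alpha)$, and the preceding corollary gives $\cos(\alpha D+\beta)f(z)=\tfrac{1}{2}(g(z)+g^*(z))$ and $\sin(\alpha D+\beta)f(z)=\tfrac{1}{2i}(g(z)-g^*(z))$. Any zero of either combination forces $|g(z)|=|g^*(z)|$, and because $|e^{\pm i\beta}|=1$ this is equivalent to $|f(z+i\alpha)|=|f(z-i\alpha)|$, so $\beta$ drops out. It therefore suffices to prove $|f(z+i\alpha)|>|f(z-i\alpha)|$ whenever $\im z>\sqrt{r^2-\alpha^2}$ and the reversed inequality whenever $\im z<-\sqrt{r^2-\alpha^2}$, with the convention that $\sqrt{r^2-\alpha^2}$ is replaced by $0$ when $r\le\alpha$.

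Write $f(z)=cz^m e^{az-bz^2}\prod_k(1-z/z_k)e^{z/z_k}$, where the zeros $z_k=x_k+iy_k$ lie in $S(r)$ and occur in complex-conjugate pairs because $f$ is real. Then $|e^{a(z+i\alpha)}/e^{a(z-i\alpha)}|=|e^{2ia\alpha}|=1$, and $|e^{-b(z+i\alpha)^2}/e^{-b(z-i\alpha)^2}|=e^{4b\alpha y}$, where $y=\im z$; the collected exponentials $e^{z/z_k}$ contribute $\exp\bigl(2\alpha\sum_k y_k/|z_k|^2\bigr)=1$, because $y_k$ and $-y_k$ cancel within each conjugate pair while $\sum|z_k|^{-2}<\infty$ gives absolute convergence. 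After these cancellations,
\[
\Bigl|\frac{f(z+i\alpha)}{f(z-i\alpha)}\Bigr|=\Bigl|\frac{z+i\alpha}{z-i\alpha}\Bigr|^m e^{4b\alpha y}\prod_k\Bigl|\frac{z_k-z-i\alpha}{z_k-z+i\alpha}\Bigr|.
\]
Grouping the remaining product by conjugate pairs, a routine algebraic calculation yields, for each pair with $y_k>0$,
\[
\Bigl|\frac{z_k-z-i\alpha}{z_k-z+i\alpha}\Bigr|^2\Bigl|\frac{\bar z_k-z-i\alpha}{\bar z_k-z+i\alpha}\Bigr|^2-1=\frac{8\,y\alpha\bigl[(x-x_k)^2+y^2+\alpha^2-y_k^2\bigr]}{|z_k-z+i\alpha|^2|\bar z_k-z+i\alpha|^2},
\]
while for a real zero $z_k=x_k$ the corresponding factor exceeds $1$ precisely when $y\alpha>0$. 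The prefactors $|z+i\alpha|^m/|z-i\alpha|^m$ and $e^{4b\alpha y}$ display the same sign behavior.

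When $|y|>\sqrt{r^2-\alpha^2}$, the estimate $y^2+\alpha^2>r^2\ge y_k^2$ forces the bracket in the pair-formula to be strictly positive, so every factor in the product representation of $|f(z+i\alpha)|/|f(z-i\alpha)|$ has the sign of $y\alpha$. Consequently the ratio exceeds $1$ for $y>\sqrt{r^2-\alpha^2}$ and is less than $1$ for $y<-\sqrt{r^2-\alpha^2}$, ruling out zeros of $g\pm g^*$ outside $S(\sqrt{r^2-\alpha^2})$. The main technical nuisance is the bookkeeping of the Weierstrass exponentials $e^{z/z_k}$, dispatched via the conjugate-pair symmetry described above; alternatively, one can first establish the lemma for polynomials in $\LP(r)$ (where no such exponentials occur) and pass to the limit by Hurwitz's theorem, since each $f\in\LP(r)$ is a locally uniform limit of polynomials whose zeros lie in $S(r)$.
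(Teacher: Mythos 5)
Your proof follows essentially the same route as the paper's: rewrite $\cos(\alpha D+\beta)f$ and $\sin(\alpha D+\beta)f$ in terms of the shifts $f(z\pm i\alpha)$, note that a zero forces $|f(z+i\alpha)|=|f(z-i\alpha)|$, and rule this out off the strip $S(\sqrt{r^2-\alpha^2})$ by a factor-by-factor comparison of the Weierstrass product, with the same key identity $8\alpha y\bigl[(x-x_k)^2+y^2+\alpha^2-y_k^2\bigr]$ driving the conjugate-pair estimate; your explicit bookkeeping of the $e^{az}$ and $e^{z/z_k}$ factors (which the paper absorbs by grouping conjugate pairs before taking moduli) is correct. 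The one thing you omit is the degenerate case $f(z)=ce^{\delta z}$ (no zeros, $m=0$, $b=0$), where your ratio is identically $1$ and the strictness argument is vacuous even though your stated sufficient claim ``$|f(z+i\alpha)|>|f(z-i\alpha)|$ for $\im z>\sqrt{r^2-\alpha^2}$'' fails; the paper handles this case separately by noting that $\cos(\alpha D+\beta)f$ is then a constant multiple of $e^{\delta z}$, hence lies in $\LP\subseteq\LP(\sqrt{r^2-\alpha^2})$. Adding that one observation makes your argument complete.
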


\begin{proof}
One can compare the Weierstrass product representation of $f(z+i\alpha)$ with
that of $f(z-i\alpha)$ to concluded that the zeros of $\cos(\alpha D+\beta)f(z)$ and
$\sin(\alpha D+\beta)$ are in the strip $S(\sqrt{r^2-\alpha^2})$. P\'olya used this
idea in the proof of Hilfssatz II in his 1926 paper~\cite{Polya1926} on the
Riemann zeta function in which he proved a Riemann hypothesis for a `fake'
zeta function. He considered a slightly simpler case, but likely was aware of
the fact stated in this lemma. One proof attributed to de Bruijn when $f(z)$
is a polynomial is found in~\cite[Theorem 2.5.1,p.~88]{RahmanSchmeisser2002}.
This lemma may be regarded as an extension of Jensen's theorem: If $f$ is
a polynomial with real coefficients, then the nonreal critical points of $f$
lie in the union of all the Jensen discs of $f$. For additional history and
various generalizations see Section 2.4 of~\cite{RahmanSchmeisser2002}.

If $f(z)$ is of the form $f(z)=ce^{\delta z}$ where $c,\delta \in \R$, then $f(z) \in
\LP$ and hence
\[
\cos(\alpha D+\beta)f(z) \in \LP \subseteq \LP(\sqrt{r^2-\alpha^2}).
\]
Similarly,
\[
\sin(\alpha D+\beta)f(z) \in \LP \subseteq \LP(\sqrt{r^2-\alpha^2}).
\]
The lemma is true in this case.

If $f(z)$ is not of the form $ce^{\delta z}$, then the Weierstrass canonical
product for $f(z)$ contains a term of the form $e^{-\gamma z^2}$ where $\gamma >0$ or the
product contains at least one term corresponding to a root of $f(z)$. Denote
the real zeros of $f(z)$ by $r_n$ and denote the complex roots with positive
imaginary part by $s_n+it_n$. By combining terms for complex
conjugate roots in the Weierstrass product, we find that $f(z)$ has the form
\begin{align} \label{eqn:cosine:0}
&c z^m e^{\delta z-\gamma z^2}
\prod_n\big(1-\tfrac{z}{r_n}\big) e^{z/r_n}
\prod_n \big(1-\tfrac{z}{s_n+it_n}\big)
\big(1-\tfrac{z}{s_n-it_n}\big)
\exp\big(\tfrac{2s_n z}{s_n^2+t_n^2}\big) \\
& = c z^m e^{\delta z-\gamma z^2}
\prod_n\big(1-\tfrac{z}{r_n}\big) e^{z/r_n}
\prod_n \tfrac{(s_n+it_n-z)(s_n-it_n -z)}{s_n^2+t_n^2}
\exp\big(\tfrac{2s_n z}{s_n^2+t_n^2}\big) \nonumber
\end{align}
where $\delta ,\gamma ,c \in \R$, $\gamma  \geq 0$, and $m$ is a nonnegative integer. For $z=x+iy$,
\begin{align} \label{eqn:cosine:1}
|f(z)|^2 & = |c|^2 (x^2+y^2)^m e^{2\delta x+2\gamma (y^2-x^2)} \prod_n \tfrac{(x-r_n)+y^2}{r_n^2} \exp\big(\tfrac{2x}{r_n}\big) \\
& \qquad \times \prod_n
\tfrac{(x-s_n)^2+(y-t_n)^2}{s_n^2+t_n^2} \cdot
\tfrac{(x-s_n)^2+(y+t_n)^2}{s_n^2+t_n^2} \cdot
\exp\big(\tfrac{4s_n x}{s_n^2+t_n^2}\big). \nonumber
\end{align}

Let $z=x+iy$ be a root of
\[
\cos(\alpha D+\beta)f(z)=\tfrac{1}{2}\big(e^{ib}f(z+i\alpha)+e^{-i\beta}f(z-i\alpha)\big)
\]
with $y \geq 0$. Then
\[
|f(z-i\alpha)|^2 = |f(z-i\alpha)|^2.
\]
By way of contradiction, assume that the root $z$ is not in the strip
$S(\sqrt{r^2-\alpha^2})$. Then $y>0$ and $y^2>r^2-\alpha^2$. We will show
that each nonconstant term in the product for $|f(z-i\alpha)|^2$ is less than
or equal to the corresponding term in the product for $|f(z+i\alpha)|^2$ and
that strict inequality holds for at least one term. This will show that
$|f(z-i\alpha)|^2<|f(z+i\alpha)|^2$ contrary to the hypothesis.

First, we consider the factors of $|f(z-i\alpha)|^2$ and $|f(z+i\alpha)|^2$ associated
with the exponential term $e^{2\delta x+2\gamma (y^2-x^2)}$ in
equation~\eqref{eqn:cosine:1}. Since $y>0$ this gives
\begin{equation}\label{eqn:cosine:2}
e^{2\delta x+2\gamma ((y-\alpha)^2-x^2)} \leq e^{2\delta x+2\gamma ((y+\alpha)^2-x^2)},
\end{equation}
where the inequality is strict if and only if $\gamma >0$.

Next, we consider the factors associated with real roots (if there are any)
of $f(z)$. Since $y>0$, we have
\begin{equation} \label{eqn:cosine:3}
(x^2+(y-\alpha)^2)^m < (x^2+(y+\alpha))^m
\end{equation}
and
\begin{equation} \label{eqn:cosine:4}
\frac{(x-r_n)^2+(y-\alpha)^2}{r_n^2} \exp\left(\frac{2x}{r_n}\right)
<
\frac{(x-r_n)^2+(y+\alpha)^2}{r_n^2} \exp\left(\frac{2x}{r_n}\right).
\end{equation}

Finally, we consider the factors of $|f(z-i\alpha)|^2$ and $|f(z+i\alpha)|^2$
associated complex conjugate pairs of roots (if there are any) of $f(z)$. We
will show that
\begin{align} \label{eqn:cosine:5}
& \frac{(x-s_n)^2+(y-\alpha-t_n)^2}{s_n^2+t_n^2} \cdot
  \frac{(x-s_n)^2+(y-\alpha+t_n)^2}{s_n^2+t_n^2} \cdot
  \exp\left(\frac{4s_n x}{s_n^+t_n^2}\right)\\
&<\frac{(x-s_n)^2+(y+\alpha-t_n)^2}{s_n^2+t_n^2} \cdot
  \frac{(x-s_n)^2+(y+\alpha+t_n)^2}{s_n^2+t_n^2} \cdot
  \exp\left(\frac{4s_n x}{s_n^+t_n^2}\right).\nonumber
\end{align}
Inequality~\eqref{eqn:cosine:5} holds if and only if
\begin{align} \label{eqn:cosine:6}
& [(x-s_n)^2+(y-\alpha-t_n)^2][(x-s_n)^2+(y-\alpha+t_n)^2] \\
& < [(x-s_n)^2+(y+\alpha-t_n)^2][(x-s_n)^2+(y+\alpha+t_n)^2].  \nonumber
\end{align}
Subtracting the left hand side of~\eqref{eqn:cosine:6} from the right hand
side along with a small calculation gives
\begin{align*}
& [(x-s_n)^2+(y+\alpha-t_n)^2][(x-s_n)^2+(y+\alpha+t_n)^2] \\
& \quad - [(x-s_n)^2+(y-\alpha-t_n)^2][(x-s_n)^2+(y-\alpha+t_n)^2] \\
& = 8\alpha y\big((x-s_n)^2+y^2-t_n^2+\alpha^2\big).
\end{align*}
Thus~\eqref{eqn:cosine:5} and~\eqref{eqn:cosine:6} hold if and only if
\begin{equation}\label{eqn:cosine:7}
(x-s_n)^2+y^2>t_n^2-\alpha^2.
\end{equation}
Because the root $z$ does not belong to $S(\sqrt{r^2-\alpha^2})$ but does
satisfy $0 \leq \im z \leq r$, it follows that $y^2>r^2-\alpha^2 \geq
t_n^2-\alpha^2$ and therefore~\eqref{eqn:cosine:7} holds implying that
inequality~\eqref{eqn:cosine:5} holds.

If $\gamma >0$ in the product representation of $f(z)$ in
equation~\eqref{eqn:cosine:0}, then inequality~\eqref{eqn:cosine:2} is
strict.  If $f(z)$ has at least one root then at least one of
strict inequalities~\eqref{eqn:cosine:3}, \eqref{eqn:cosine:4}, or
\eqref{eqn:cosine:5} holds. Either way,
\[
|f(z-i\alpha)|^2<|f(z+i\alpha)|^2,
\]
which is a contradiction. This completes the proof of
Lemma~\ref{lemma:cosineandsineoperators} for the operator $\cos(\alpha D+\beta)$. The
proof for the operator $\sin(\alpha D+\beta$) entirely similar. The proof of the lemma
is complete.
\end{proof}

Recently Lagarias~\cite{Lagarias2005} applied operators such
as the one in Lemma~\ref{lemma:cosineandsineoperators} to study the zero distribution
on the `critical line' for various differenced $L$-functions from analytic
number theory.  Several generalizations of Lemma~\ref{lemma:cosineandsineoperators}
can also be found in
Cardon~\cite{Cardon2002,cardon2002b,Cardon2004,Cardon2005}.

\begin{lemma}\label{lemma:exponentialsquared}
Let $\alpha \geq 0$ and assume
$f(z) \in \LP^{\alpha^2/2}(r)$. Then
\[
\exp(-\tfrac{\alpha^2 D^2}{2}) f(z) \in \LP(\sqrt{r^2-\alpha^2}).
\]
\end{lemma}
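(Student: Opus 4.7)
My plan is to realize $\exp(-\tfrac{\alpha^2 D^2}{2})$ as a limit of iterated cosine operators, each of which is already controlled by Lemma~\ref{lemma:cosineandsineoperators}. The key classical identity is that $[\cos(\alpha z/\sqrt{n})]^{n} \to e^{-\alpha^2 z^2/2}$ uniformly on compact subsets of $\C$. Setting $\beta_n = \alpha/\sqrt{n}$ and $C_n = \cos(\beta_n D)$, the operator $C_n^{\,n}$ acts on any entire function as an explicit binomial average of translates,
\[
C_n^{\,n} f(z) = \frac{1}{2^n} \sum_{k=0}^{n} \binom{n}{k} f\bigl(z + i\beta_n (n-2k)\bigr),
\]
so there are no convergence issues at finite $n$.

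Iterating Lemma~\ref{lemma:cosineandsineoperators} $n$ times, starting from $f \in \LP(r)$, yields
\[
g_n(z) := C_n^{\,n} f(z) \in \LP\Bigl(\sqrt{r^2 - n\beta_n^2}\,\Bigr) = \LP\Bigl(\sqrt{r^2 - \alpha^2}\,\Bigr),
\]
with the case $\alpha \geq r$ covered by the paper's convention $\LP(\rho) = \LP$ for $\rho^2 \leq 0$ (all intermediate strips in the iteration that collapse past zero simply become $\LP$, which is preserved by any further application of $C_n$). Thus every $g_n$ already has its zeros confined to the target strip $S(\sqrt{r^2 - \alpha^2})$. All that remains is to show that $g_n \to \exp(-\tfrac{\alpha^2 D^2}{2}) f$ locally uniformly on $\C$; then the Hurwitz-type closure of $\LP(\rho)$ under locally uniform limits, together with verifying that the limit is not identically zero (for instance by examining its leading Taylor coefficient at a point where $f$ does not vanish), will complete the proof.

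The main obstacle is the operator-level convergence in that last step, and this is exactly where the hypothesis $f \in \LP^{\alpha^2/2}(r)$ enters. Writing $\phi_n(z) = [\cos(\beta_n z)]^{n}$ with Taylor coefficients $a_k^{(n)}$, Cauchy's formula on $|z|=R$ together with $|\cos(\beta_n z)|^n \leq [\cosh(\beta_n |z|)]^n$ produces a bound of the form $|a_k^{(n)}| \leq R^{-k} \exp(\alpha^2 R^2/2 + o(1))$ that is uniform in $n$. Combined with the growth condition $f = e^{-\gamma_2 z^2} f_1$ with $(\alpha^2/2)\gamma_2 < 1/4$ coming from the definition of $\LP^{\alpha^2/2}(r)$, this majorization dominates $\sum_k a_k^{(n)} f^{(k)}(z)$ uniformly in $n$ on compact sets and justifies term-by-term passage to the limit $\phi_n(D) f \to \phi(D) f$. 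This is the same convergence mechanism already used in the proof of Lemma~\ref{lemma:differentialoperator}, and adapting that argument to the family $\{\phi_n\}$ is where the bulk of the technical work lies.
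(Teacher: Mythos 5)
Your proof follows essentially the same route as the paper: the paper likewise realizes $\exp(-\tfrac{\alpha^2 D^2}{2})$ as the limit of iterated cosine operators (using $\bigl(\cos(\tfrac{\alpha z}{n})\bigr)^{n^2}\to e^{-\alpha^2 z^2/2}$, an equivalent parametrization of your $\bigl[\cos(\alpha z/\sqrt{n})\bigr]^n$), applies Lemma~\ref{lemma:cosineandsineoperators} once per factor to shave $\beta_n^2$ off $r^2$ at each step, and then passes to the limit. The only cosmetic difference is that the paper first treats polynomials and then approximates general $f\in\LP^{\alpha^2/2}(r)$ by polynomials, whereas you act on general $f$ directly and justify the operator-level convergence by domination; both resolve the same limit-interchange issue, so the arguments are essentially identical.
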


\begin{proof}
We take advantage of the limit formula
\[
\lim_{n \rightarrow \infty} \left(\cos\left(\tfrac{\alpha z}{n}\right)\right)^{n^2} = \exp(-\tfrac{\alpha^2 z^2}{2}).
\]
Initially, let $f(z)$ be a polynomial in $\LP(r)$. Applying the formula in
Lemma~\ref{lemma:cosineandsineoperators} a total of $n^2$ times for the operator
$\cos(\tfrac{\alpha D}{n})$ gives
\[
\left(\cos\left(\tfrac{\alpha D}{n}\right)\right)^{n^2} f(z) \in \LP\left(\sqrt{r^2 - \tfrac{\alpha^2}{n^2}-\cdots-\tfrac{\alpha^2}{n^2}}\right) = \LP(\sqrt{r^2 - \alpha^2}).
\]
Taking the limit gives $\exp(-\tfrac{\alpha^2 D^2}{2})f(z) \in
\LP(\sqrt{r^2-\alpha^2})$. By considering sequences of polynomials in
$\LP(r)$, the result extends to functions $f(z)=e^{-\beta^2 z^2/2}f_1(z)$ in
$\LP(r)$, provided we assume $\alpha\beta<1$ as required by
Lemma~\ref{lemma:differentialoperator}.
\end{proof}

Now that we understand the effect of the operator $\exp(-\tfrac{\alpha^2
D^2}{2})$, we can finish the proof of Theorem~\ref{theorem:MainTheorem:1}.
Assume $\phi(z)=e^{-\alpha^2 z^2/2} \phi_1(z)$ where $\phi_1(z) \in \LP_1$
and let $f(z) \in \LP^{\alpha^2/2}(r)$.  By
Lemma~\ref{lemma:exponentialsquared},
\[
\exp\big(-\tfrac{\alpha^2 D^2}{2}\big)f(z) \in \LP(\sqrt{r^2-\alpha^2}).
\]
By Lemma~\ref{lemma:differentialoperator}, $\phi_1(D)$ maps
$\LP(\sqrt{r^2-\alpha^2})$ into itself. Hence,
\[
\phi(D)f(z)=\phi_1(D)\exp\big(-\tfrac{\alpha^2 D^2}{2}\big)f(z) \in
\LP(\sqrt{r^2-\alpha^2}).
\]
Since $\alpha>0$ this proves that $\phi(D)$ is a CZSDO and the proof of
Theorem~\ref{theorem:MainTheorem:1} is complete.

\section{Proof of Theorem~\ref{theorem:MainTheorem:2}}
\label{section:proof:MainTheorem:2} Since the proof requires the concepts of
order~$\rho$ and type~$\sigma$ of an entire function, we recall their
definitions, which we take from Chapter~1 of Levin~\cite{Levin1980}. For an
arbitrary entire function $\phi$, set
\[
M_\phi(r)=\max_{|z|=r} |\phi(z)|.
\]
The function $\phi$ is said to have \textit{finite order} if there exists a
positive real number $k>0$ such that
\begin{equation} \label{inequality:order:1}
M_{\phi}(r)<e^{r^k}
\end{equation}
for all sufficiently large $r$. If $\phi$ has finite order, the
\textit{order} $\rho$ of $\phi$ is defined to be the greatest lower bound of
the numbers $k$ in~\eqref{inequality:order:1}.

It follows that for arbitrary $\epsilon>0$
\begin{equation} \label{inequality:order:2}
e^{r^{\rho-\epsilon}}<M_{\phi}(r)<e^{r^{\rho+\epsilon}}
\end{equation}
where the inequality on the right holds for all sufficiently large $r$ and
the inequality on the left holds for some positive increasing sequence
$\{r_n\}$ with $\lim_{n \rightarrow\infty} r_n=\infty$.

The \textit{type} $\sigma$ of a function $\phi$ having positive finite order
$\rho$ is the greatest lower bound of the positive numbers $\epsilon$ such
that
\begin{equation}\label{inequality:type}
M_{\phi}(r)<e^{\epsilon r^{\rho}}
\end{equation}
for all sufficiently large $r$.  If $\sigma=0$, then $\phi$ is said to have
\textit{minimal type}.

\begin{lemma}  \label{lemma:evenphinotCZSDO}
Let $\phi(z) \in \LP_1$.  If $\phi(D)\phi(-D)$ is not a CZSDO, then $\phi(D)$
is not a CZSDO. Therefore, in proving Theorem~\ref{theorem:MainTheorem:2}
there is no loss of generality in assuming that $\phi(z) \in \LP_1$ is an
even function.
\end{lemma}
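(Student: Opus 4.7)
The plan is to prove the contrapositive of the first assertion: assuming $\phi(D)$ is a CZSDO, I will show that $\phi(D)\phi(-D)$ is also a CZSDO. Because the function $\psi(z):=\phi(z)\phi(-z)$ is manifestly even, and because $\phi(D)\phi(-D)=\psi(D)$ as operators, this reduction justifies the concluding remark, namely that in proving Theorem~\ref{theorem:MainTheorem:2} one may assume $\phi\in\LP_1$ is even.

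The first step is to show $\phi(-D)$ inherits the CZSDO property from $\phi(D)$ via the symmetry $z\mapsto -z$. Setting $g(z):=f(-z)$, one has $g^{(k)}(-z)=(-1)^k f^{(k)}(z)$, so
\[
\phi(-D)f(z)=\sum_k a_k(-1)^k f^{(k)}(z)=\sum_k a_k g^{(k)}(-z)=\bigl[\phi(D)g\bigr](-z).
\]
Because the strip $S(r)$ is symmetric under $z\mapsto -z$, the map $h(z)\mapsto h(-z)$ preserves $\LP(r)$; hence if $\phi(D)$ sends $\LP(r)$ into $\LP(r_1)$ with $r_1<r$, then so does $\phi(-D)$.

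The second step is to chain the operators. Since $\phi\in\LP_1$, its Gaussian parameter is $\gamma_1=0$, so both $\phi(D)$ and $\phi(-D)$ act on all of $\LP(r)$. Given $f\in\LP(r)$, apply $\phi(-D)$ to land in $\LP(r_1)$, and then apply $\phi(D)$ to land in $\LP(r_2)$ for some $r_2<r_1<r$ depending only on $\phi$ and $r$. To identify this composition with $\psi(D)$, I would verify the operator identity $\phi(D)\phi(-D)=\psi(D)$: for polynomial $\phi$ this is an elementary coefficient computation, and for general $\phi\in\LP_1$ the identity passes to the limit by Lemma~\ref{lemma:differentialoperator} applied to an approximating sequence of polynomials. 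A short Weierstrass-product calculation further confirms $\psi\in\LP_1$: the linear exponential convergence factors cancel in pairs, leaving $\psi(z)=c^2(-1)^m z^{2m}\prod_k(1-z^2/a_k^2)$, which is even and of genus at most~$1$. Thus $\psi(D)$ is a CZSDO, completing the contrapositive.

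For the ``without loss of generality'' remark one applies the first part to whichever case of Theorem~\ref{theorem:MainTheorem:2} is at hand. In case~(a), the order of $\psi$ is at most that of $\phi$, hence still less than~$1$; in case~(b), the bound $M_\psi(r)\leq M_\phi(r)^2$ shows $\psi$ is again of order~$1$ and minimal type (or degenerates to a constant, in which case $\psi(D)$ is a scalar multiple of the identity and trivially fails the CZSDO property). Case~(c) is handled directly by Lemma~\ref{lemma:shiftingoperator} and requires no reduction. The main obstacle I anticipate is the justification of the operator identity $\phi(D)\phi(-D)=\psi(D)$ in the entire-function setting rather than only for polynomials; once the convergence is secured via Lemma~\ref{lemma:differentialoperator}, the remainder of the argument is symmetry and bookkeeping.
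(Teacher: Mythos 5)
Your proposal is correct and follows essentially the same route as the paper: argue the contrapositive by composing the two operators, using that $\phi(-D)$ preserves (indeed shrinks) the strip when $\phi(D)$ is a CZSDO, so that $\phi(D)\phi(-D)=\psi(D)$ with $\psi(z)=\phi(z)\phi(-z)$ even is again a CZSDO. The extra details you supply --- the identity $\phi(-D)f(z)=[\phi(D)g](-z)$ for $g(z)=f(-z)$, the Weierstrass-product form of $\psi$, and the check that $\psi$ retains the order/type hypotheses of each case --- are points the paper leaves implicit, and they are handled correctly.
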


\begin{proof}
Suppose, by way of contradiction, that $\phi(D)$ is a CZSDO.  For any $r>0$,
there exists $r_1$ with $0 \leq r_1 < r$ such that for any $f(z) \in \LP(r)$
it follows that $\phi(D) f(z) \in \LP(r_1)$.  The operator $\phi(-D)$ maps
$\LP(r)$ into itself. So, $\phi(D)\phi(-D)f(z) \in \LP(r_1)$ implying that
$\phi(D)\phi(-D)$ is a CZSDO, which contradicts the hypothesis. Therefore,
$\phi(D)$ is not a CZSDO.
\end{proof}

By Lemma~\ref{lemma:evenphinotCZSDO}, there is no loss of generality in
assuming $\phi(z)$ in an even function. Therefore, we will assume $\phi(z)$
in the rest of the proof of Theorem~\ref{theorem:MainTheorem:2}.

The key to proving Theorem~\ref{theorem:MainTheorem:2} will be to let
$\phi(D)$ act on extremal example functions with evenly spaced zeros that are
on the boundary of the strip $S(r)$. If $\phi(0)\neq 0$, we will consider
$\phi(D)f_a(z)$, where
\begin{equation}  \label{eqn:fsuba}
f_a(z)=\cos(a(z-ir))\cos(a(z+ir))=\tfrac{1}{2}\big(\cos(2az)+\cosh(2ar)\big).
\end{equation}
If $\phi(0)=0$, we will consider $\phi(D)g_a(z)$, where
\begin{align} \label{eqn:gsuba}
g_a(z) & =\cos^2(a(z-ir)\cos^2(a(z+ir)).
\end{align}
In both cases, we'll show $\phi(D)$ is not a CZSDO by choosing $a>0$ to be
sufficiently large. Computing with $f_a(z)$ and $g_a(z)$ will require the
following easy lemma:

\begin{lemma}  \label{lemma:eigenfunction}
For any $\phi(z) \in \LP$, $\phi(D)e^{a z} = \phi(a) e^{a z}$.
Consequently, if $\phi(z)$ is even, then
\begin{align*}
\phi(D) \cos(a z) & = \phi(ai)\cos(az) \\
\phi(D) \sin(a z) & = \phi(ai)\sin(az) \\
\phi(D) \cosh(az) & = \phi(a) \cosh(az) \\
\phi(D) \sinh(az) & = \phi(a) \sinh(az).
\end{align*}
\end{lemma}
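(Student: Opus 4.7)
The plan is a direct term-by-term computation justified by Lemma~\ref{lemma:differentialoperator}. First I would check that the hypothesis of that lemma applies to $f(z) = e^{az}$: since $e^{az}$ has no zeros, its Weierstrass representation has $\beta = 0$ and no product factors, so $e^{az} \in \LP_1 \subseteq \LP_1(r)$ for every $r \geq 0$. Writing $e^{az} = e^{-\gamma_2 z^2} f_1(z)$ with $\gamma_2 = 0$ and $f_1(z) = e^{az}$, the pairing condition $\gamma_1 \gamma_2 < 1/4$ is satisfied automatically for any $\phi(z) = e^{-\gamma_1 z^2}\phi_1(z) \in \LP$. Hence $e^{az} \in \LP^{\gamma_1}(r)$ and Lemma~\ref{lemma:differentialoperator} guarantees that $\phi(D) e^{az} = \sum_{k=0}^{\infty} a_k \, (e^{az})^{(k)}$ converges uniformly on compact subsets of $\C$.

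Next I would compute the sum explicitly. Since $D^k e^{az} = a^k e^{az}$ for every nonnegative integer $k$,
\[
\phi(D)e^{az} = \sum_{k=0}^{\infty} a_k \, a^k \, e^{az} = \Big(\sum_{k=0}^{\infty} a_k a^k\Big) e^{az} = \phi(a) \, e^{az},
\]
where the bracketed sum equals $\phi(a)$ because $\phi$ is entire and its Taylor series converges absolutely at the point $a \in \C$. This establishes the first (main) identity.

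The four consequences would then follow from linearity of $\phi(D)$ together with the standard exponential decompositions
\[
\cos(az) = \tfrac{1}{2}\big(e^{iaz}+e^{-iaz}\big), \quad \sin(az) = \tfrac{1}{2i}\big(e^{iaz}-e^{-iaz}\big),
\]
\[
\cosh(az) = \tfrac{1}{2}\big(e^{az}+e^{-az}\big), \quad \sinh(az) = \tfrac{1}{2}\big(e^{az}-e^{-az}\big).
\]
Applying the eigenvalue formula to each exponential gives eigenvalues $\phi(\pm ia)$ or $\phi(\pm a)$, and because $\phi$ is even these collapse to a single common value $\phi(ia)$ or $\phi(a)$. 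Combining terms yields the four stated identities.

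I do not anticipate any genuine obstacle: the whole statement is the familiar observation that exponentials are eigenfunctions of constant-coefficient linear differential operators, extended to the infinite-order case. The only subtle point is justifying interchange of the infinite sum with the differentiation operator, and that is precisely what Lemma~\ref{lemma:differentialoperator} is designed to provide, via the hypothesis $\gamma_1 \gamma_2 < 1/4$ which is trivial here.
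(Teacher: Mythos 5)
Your proposal is correct and follows essentially the same route as the paper: expand $\phi(D)$ termwise on $e^{az}$ using $D^k e^{az}=a^k e^{az}$ to get the eigenvalue $\phi(a)$, then obtain the four trigonometric and hyperbolic identities from the exponential decompositions together with evenness of $\phi$. Your extra care in invoking Lemma~\ref{lemma:differentialoperator} to justify convergence is a welcome refinement but does not change the argument.
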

\begin{proof}
Write $\phi(z)=\sum_{n=0}^{\infty} c_n z^n$. Then
\[
\phi(D) e^{a z}  = \sum_{n=0}^{\infty} c_n \tfrac{d^n}{dz^n} e^{a z}
= \sum_{n=0}^{\infty} c_n a^n e^{a z} = \phi(a) e^{a z}.
\]
When $\phi(z)$ is even, the formulas involving $\cos(az)$, $\sin(az)$,
$\cosh(az)$, and $\sinh(az)$ follow immediately by expressing them in
terms of exponential functions.
\end{proof}

\begin{lemma} \label{lemma:phi:nonvanishingat0}
Let $\phi(z) \in \LP_1$ be an even function and suppose $\phi(0) \neq 0$.
\begin{enumerate}
\item[(a)] If $\phi(z)$ has order $\rho<1$, then $\phi(D)$ is
    \textit{not} a CZSDO.
\item[(b)] If $\phi(z)$ has order $\rho=1$ but is of minimal type, then
    $\phi(D)$ is \textit{not} a CZSDO.
\end{enumerate}
\end{lemma}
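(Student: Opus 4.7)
The plan is to exploit the extremal test family $f_a(z)=\tfrac{1}{2}\bigl(\cos(2az)+\cosh(2ar)\bigr)\in\LP(r)$, whose zeros lie precisely on the boundary lines $\im z=\pm r$, and to show that for large $a$ the function $\phi(D)f_a$ has zeros whose imaginary parts approach $r$. Since the CZSDO property requires a \emph{uniform} $r_1<r$ valid for \emph{all} $f\in\LP^{\gamma_1}(r)$, this will suffice to prove both parts.

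First I would compute $\phi(D)f_a$ explicitly. Since $\phi\in\LP_1$ is even with $\phi(0)\neq 0$, we may factor $\phi(z)=c\prod_k(1-z^2/\beta_k^2)$ with $\beta_k\in\R$, from which $\phi(2ai)/\phi(0)=\prod_k(1+4a^2/\beta_k^2)>0$. Applying Lemma~\ref{lemma:eigenfunction} to $\cos(2az)$, and observing that $\phi(D)$ acts on the constant $\cosh(2ar)$ by multiplication by $\phi(0)$, gives
\[
\phi(D)f_a(z)=\tfrac{1}{2}\bigl(\phi(2ai)\cos(2az)+\phi(0)\cosh(2ar)\bigr).
\]
Setting this to zero with $z=x+iy$, $y>0$, and separating real and imaginary parts of $\cos(2az)=\cos(2ax)\cosh(2ay)-i\sin(2ax)\sinh(2ay)$, one is forced into $\sin(2ax)=0$ and $\cos(2ax)=-1$, leaving
\[
\cosh(2ay)=\frac{\phi(0)\cosh(2ar)}{\phi(2ai)}.
\]

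The next step is an asymptotic analysis. Provided the right-hand side is $\geq 1$, one obtains $y\in(0,r]$ with $2ay=\operatorname{arccosh}\bigl(\phi(0)\cosh(2ar)/\phi(2ai)\bigr)$. Using $\operatorname{arccosh}(X)=\log(2X)+o(1)$ as $X\to\infty$ and $\cosh(2ar)=\tfrac{1}{2}e^{2ar}(1+o(1))$, a short calculation gives
\[
r-y=\frac{\log\bigl(\phi(2ai)/\phi(0)\bigr)}{2a}+o(1/a)\qquad (a\to\infty).
\]
For part~(a), the order hypothesis $\rho<1$ yields $\log|\phi(2ai)|\leq(2a)^{\rho+\epsilon}$ for every $\epsilon>0$ and $a$ large; choosing $\epsilon<1-\rho$ gives $r-y=O(a^{\rho+\epsilon-1})\to 0$. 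For part~(b), the minimal-type hypothesis gives $\log|\phi(2ai)|\leq 2\epsilon a$ eventually for every $\epsilon>0$, so $\limsup_{a\to\infty}(r-y)\leq\epsilon$ for every $\epsilon$, and hence $r-y\to 0$ again. Either way, given any $r_1<r$, one picks $a$ large enough that $\phi(D)f_a$ has a zero with imaginary part exceeding $r_1$, contradicting CZSDO.

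The main obstacle is verifying that the equation $\cosh(2ay)=\phi(0)\cosh(2ar)/\phi(2ai)$ actually has a real solution for large $a$, i.e.\ that the right-hand side exceeds $1$; this requires comparing the exponential growth of $\cosh(2ar)$ against the sub-exponential growth of $\phi(2ai)$, which is precisely where the order or minimal-type hypothesis enters (a second time, ensuring the estimate is not vacuous). A minor technical point is confirming that $\phi(0)/\phi(2ai)>0$, which follows for free from the even $\LP_1$ factorization of $\phi$ written above.
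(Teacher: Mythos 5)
Your proposal is correct and follows essentially the same route as the paper: the same extremal family $f_a(z)=\tfrac12\bigl(\cos(2az)+\cosh(2ar)\bigr)$, the same eigenfunction computation giving $\phi(D)f_a(z)=\tfrac12\bigl(\phi(2ai)\cos(2az)+\phi(0)\cosh(2ar)\bigr)$, and the same asymptotic comparison of $\cosh(2ar)$ against the subexponential bound on $\phi(2ai)$ coming from the order or minimal-type hypothesis. The only cosmetic difference is that you locate the nonreal zeros by separating real and imaginary parts of $\cos(2az)$, whereas the paper refactors the result as $\phi(2ai)\cos(a(z-ir_1))\cos(a(z+ir_1))$ with $\cosh(2ar_1)=\cosh(2ar)/\phi(2ai)$; both yield the same conclusion $r_1\to r$.
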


\begin{proof}
By multiplying $\phi(z)$ by a nonzero real number, if necessary, we may
suppose without loss of generality that $\phi(0)=1$.
Because $\phi(z)$ is even and $\phi(0)=1$,
\[
M_\phi(r) = \max_{|z|=r} |\phi(z)| = \phi(ir).
\]
Also since $\phi(0)=1$ if $\beta$ is constant, then $\phi(D)\beta = \beta$.

Applying the operator $\phi(D)$ to the function $f_a(z)$ in
equation~\eqref{eqn:fsuba} and using Lemma~\ref{lemma:eigenfunction} gives
\begin{align*}
\phi(D) f_a(z) & = 2^{-1}\big( \phi(2ai)\cos(2az)+ \cosh(2ar)\big) \\
& = 2^{-1} \phi(2ai) \left( \cos(2az) + \frac{\cosh(2ar)}{\phi(2ai)}\right).
\end{align*}
Since $a$ is positive $\phi(2ai)>1$. There are two possible cases:
\begin{enumerate}
\item[(i)] If $0<\tfrac{\cosh(2ar}{\phi(2ai)} \leq 1$, the zeros of $\phi(D)f_a(z)$ are real.
\item[(ii)] If $\tfrac{\cosh(2ar)}{\phi(2ai)} >1$, the zeros of $\phi(D)f_a(z)$ are complex.
\end{enumerate}

In case~(i), there exists $r_1$ with $0 \leq 2ar_1 \leq \pi/2$ such that
\[
\cos(2ar_1)=\frac{\cosh(2ar)}{\phi(ai)}.
\]
Then
\begin{align*}
\phi(D)f_a(z) & = 2^{-1} \phi(2ai) \big( \cos(2az)+\cos(2ar_1) \big) \\
& = \phi(2ai) \cos(a(z-r_1)) \cos (a(z+r_1)).
\end{align*}
This verifies that the roots of $\phi(D)f_a(z)$ are real in the case~(i), as
claimed.

In case~(ii), since $\cosh(2ar)/\phi(2ai)>1$ and $\phi(2ai)>1$, there exists
$r_1$ with $0<r_1<r$ such that
\[
1<\cosh(2ar_1)=\frac{\cosh(2ar)}{\phi(2ai)} < \cosh(2ar).
\]
We obtain
\begin{align*}
\phi(D)f_a(z) & = 2^{-1}\phi(2ai)\big( \cos(2az) + \cosh(2ar_1) \big) \\
& = \phi(2ai) \cos(a(z-ir_1)) \cos(a(z+i r_1)),
\end{align*}
which shows that the roots of $\phi(D)f_a(z)$ are complex with imaginary part
$\pm r_1$. Solving for $r_1$ gives
\begin{equation} \label{equation:r1:phinonvanishing}
r_1 = \frac{1}{2a} \cosh^{-1}\left(\frac{\cosh(2ar)}{\phi(2ai)}\right).
\end{equation}

We will show that, by choosing~$a$ to be sufficiently large, case~(ii) occurs
and $r_1$ can be made to be arbitrarily close to $r$,  proving that $\phi(D)$
is not a CZSDO.

As in part~(a) of Lemma~\ref{lemma:phi:nonvanishingat0}, suppose $\phi(z)$
has order $\rho<1$. For all sufficiently large positive~$a$,
\[
M_{\phi}(2a) = \phi(2ai)<e^{(2a)^{\rho}}.
\]
Then for sufficiently large $a$ we have
\[
\frac{\cosh(2ar)}{\phi(2ai)} > \frac{e^{2ar}}{2 e^{(2a)^{\rho}}} >1.
\]
Therefore, the roots of $\phi(D)f_a(z)$ have nonzero imaginary part $\pm
r_1$, as in equation~\eqref{equation:r1:phinonvanishing}, and
\begin{align*}
r_1 & = \frac{1}{2a} \cosh^{-1}\left(\frac{\cosh(2ar)}{\phi(2ai)}\right) \\
& > \frac{1}{2a} \cosh^{-1}\left(\frac{e^{2ar}}{2 e^{(2a)^{\rho}}}\right) \\
& > \frac{1}{2a} \log \left(\frac{e^{2ar}}{2 e^{(2a)^{\rho}}}\right)  \\
& = r-(2a)^{\rho-1} - \frac{\log(2)}{2a}.
\end{align*}
By choosing $a$ to be sufficiently large, this lower bound on $r_1$ can be
made to be arbitrarily close to $r$.  Therefore, there does not exist $r'$
with $0\leq r' < r$ such that $\phi(D)h(z) \in \LP(r')$ for all $h(z) \in
\LP(r)$. This show that $\phi(D)$ is not a CZSDO when $\rho<1$, proving
Lemma~\ref{lemma:phi:nonvanishingat0}(a).

Next, as in part~(b) of Lemma~\ref{lemma:phi:nonvanishingat0}, assume
$\phi(z)$ has order $\rho=1$ and has minimal type. For any
$\epsilon>0$ and all sufficiently large $r$,
\[
M_\phi(r) = \phi(ir) < e^{\epsilon r}.
\]
Choosing $\epsilon$ to be very small relative to $r$ and letting $a$ be
sufficiently large gives
\[
\frac{\cosh(2ar)}{\phi(2ai)} > \frac{e^{2ar}}{2\phi(2ai)} > \frac{e^{2ar}}{2 e^{\epsilon a}}>1.
\]
Therefore, the roots of $\phi(D)f_a(z)$ have imaginary part $\pm r_1$, as in
equation~\eqref{equation:r1:phinonvanishing}, and
\begin{align*}
r_1 & = \frac{1}{2a} \cosh^{-1}\left(\frac{\cosh(2ar)}{\phi(2ai)}\right) \\
& > \frac{1}{2a} \cosh^{-1}\left(\frac{e^{2ar}}{2 e^{2\epsilon a}}\right) \\
& > \frac{1}{2a} \log \left(\frac{e^{2ar}}{2 e^{2\epsilon a}}\right) \\
& = r - \epsilon - \frac{\log 2}{2a}.
\end{align*}
By choosing $\epsilon$ to be sufficiently small and $a$ sufficiently large,
this lower bound for $r_1$ can be made to be arbitrarily close to $r$. This
shows that $\phi(D)$ is not a CZSDO when $\phi(z)$ has order $\rho=1$ and has
minimal type, proving Lemma~\ref{lemma:phi:nonvanishingat0}(b).
\end{proof}

Combining Lemmas~\ref{lemma:evenphinotCZSDO}
and~\ref{lemma:phi:nonvanishingat0} proves
Theorem~\ref{theorem:MainTheorem:2} parts~(a) and~(b) in the case $\phi(0)
\neq 0$. We next deal with the case $\phi(0)=0$.

\begin{lemma} \label{lemma:phi:vanishingat0}
Let $\phi(z) \in \LP_1$ be an even function and suppose $\phi(0) = 0$.
\begin{enumerate}
\item[(a)] If $\phi(z)$ has order $\rho<1$, then $\phi(D)$ is
    \textit{not} a CZSDO.
\item[(b)] If $\phi(z)$ has order $\rho=1$ but is of minimal type, then
    $\phi(D)$ is \textit{not} a CZSDO.
\end{enumerate}
\end{lemma}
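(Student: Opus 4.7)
My plan is to mirror the structure of Lemma~\ref{lemma:phi:nonvanishingat0}, but replace the extremal test function $f_a(z)$ with $g_a(z)=\cos^2(a(z-ir))\cos^2(a(z+ir))$. The reason $f_a$ fails here is that $\phi(D)$ annihilates constants when $\phi(0)=0$, so the constant $\cosh(2ar)$ in $f_a$ disappears and we lose the mechanism used before. The function $g_a$ contains a $\cos(4az)$ harmonic as well, so after $\phi(D)$ acts, two nontrivial terms survive.

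First I would expand $g_a(z)$ in Fourier form. Using $\cos^2 w=\tfrac{1}{2}(1+\cos 2w)$ together with the product-to-sum identities, one obtains
\[
g_a(z)=\tfrac{1}{4}+\tfrac{1}{8}\cosh(4ar)+\tfrac{1}{2}\cosh(2ar)\cos(2az)+\tfrac{1}{8}\cos(4az).
\]
Since $\phi$ is even with $\phi(0)=0$ (the evenness reduction is legitimate by Lemma~\ref{lemma:evenphinotCZSDO}), Lemma~\ref{lemma:eigenfunction} together with $\phi(D)\,(\text{const})=0$ yields
\[
\phi(D)g_a(z)=\tfrac{1}{2}\phi(2ai)\cosh(2ar)\cos(2az)+\tfrac{1}{8}\phi(4ai)\cos(4az).
\]
Setting $u=\cos(2az)$ and using $\cos(4az)=2u^2-1$, the zeros of $\phi(D)g_a(z)$ correspond to solutions of the quadratic
\[
\tfrac{\phi(4ai)}{4}u^2+\tfrac{\phi(2ai)\cosh(2ar)}{2}u-\tfrac{\phi(4ai)}{8}=0.
\]

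The key observation is that this quadratic has two real roots $u_\pm$ with product $-1/2$, so exactly one of them, call it $u_-$, satisfies $u_-<-1$ for all sufficiently large $a$. A short asymptotic expansion gives
\[
|u_-|\;\sim\;\frac{2\phi(2ai)\cosh(2ar)}{\phi(4ai)}
\]
as $a\to\infty$, provided the ratio $\phi(4ai)/\phi(2ai)$ does not grow faster than $\cosh(2ar)$, which is guaranteed by the hypothesis on order/type. The equation $\cos(2az)=u_-$ with $u_-<-1$ is solved by $z=\tfrac{\pi}{2a}+\tfrac{i\alpha}{a}$ where $\alpha=\tfrac{1}{2}\cosh^{-1}|u_-|$, producing a zero of $\phi(D)g_a(z)$ with imaginary part
\[
\frac{\alpha}{a}=\frac{1}{2a}\cosh^{-1}\!\left(\frac{2\phi(2ai)\cosh(2ar)}{\phi(4ai)}\right)
= r+\frac{1}{2a}\bigl[\log\phi(2ai)-\log\phi(4ai)+O(1)\bigr].
\]
Because $\phi$ has order $\rho<1$ in part (a), we have $\log\phi(2ai)=O((2a)^{\rho+\epsilon})=o(a)$, and similarly for $\phi(4ai)$; in part (b) with $\rho=1$ of minimal type, $\log\phi(2ai)=o(a)$ directly from the definition of minimal type. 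Either way the correction term is $o(1)$, so the imaginary part of this zero tends to $r$ from below as $a\to\infty$. Since $g_a\in\LP(r)$ and no $r_1<r$ can contain all these zeros uniformly in $a$, the operator $\phi(D)$ is not a CZSDO, completing both parts.

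The only step I anticipate needing care is verifying that $u_-<-1$ (so that the corresponding $z$ is genuinely off the real axis) and bounding $|u_-|$ from below, because for small $a$ or for atypical $\phi$ the balance between $\phi(2ai)\cosh(2ar)$ and $\phi(4ai)$ could conceivably behave poorly. I would handle this by noting $\phi(2ai),\phi(4ai)>0$ (which follows because evenness of $\phi\in\LP_1$ gives $\phi(z)=c\,z^{2m}\prod(1-z^2/\alpha_k^2)$ with constant sign on the imaginary axis), and then using the order/type bounds uniformly in $a$ to trap the ratio $\phi(4ai)/\phi(2ai)$ against $\cosh(2ar)$.
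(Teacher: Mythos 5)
Your proposal is correct and follows essentially the same route as the paper: the same test function $g_a$, the same Fourier expansion, the same reduction to a quadratic in $\cos(2az)$ whose root of absolute value greater than $1$ is bounded below by $\cosh(2ar)\phi(2ai)\big/\bigl(\tfrac12\phi(4ai)\bigr)$, and the same order/type asymptotics forcing the imaginary part of the resulting zeros toward $r$. The only (harmless) cosmetic differences are that you apply $\phi(D)$ directly using $\phi(0)=0$ to kill the constants instead of factoring $\phi(z)=z^{2m}\phi_1(z)$ and differentiating $2m$ times first, and you lower-bound $\phi(2ai)$ via the product representation rather than via the subsequence coming from the definition of order.
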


\begin{proof}
After multiplying $\phi(z)$ by a constant, if necessary, we may assume
$\phi(z)$ is of the form
\[
\phi(z) = z^{2m} \phi_1(z)
\]
where $m$ is a positive integer and $\phi_1(0)=1$.  We will let $\phi(D)$ act
on the function $g_a(z)$ from equation~\eqref{eqn:gsuba}. Rewrite $g_a(z)$ as
\begin{align*}
g_a(z) & = \cos^2(a(z-ir))\cos^2(a(z+ir)) \\
& = \frac{1}{4} + \frac{\cosh(4ar)}{8} + \frac{\cos(4az)}{8} + \frac{\cosh(2ar)\cos(2az)}{2}.
\end{align*}
Differentiating $2m$-times gives
\[
g_a^{(2m)}(z)
= (-1)^m \left(\frac{(4a)^{2m} \cos(4z)}{8} + \frac{(2a)^{2m} \cosh(2r)\cos(2z)}{2}\right).
\]
Applying $\phi_1(D)$ to this expression with the help of
Lemma~\ref{lemma:eigenfunction} gives
\begin{align*}
&\phi(D)g_a(z)  \\
& =
(-1)^{m} \left(
 \frac{(4a)^{2m} \phi_1(4ai) \cos(4az)}{8}
  +
  \frac{(2a)^{2m} \cosh(2ar)\phi_1(2ai)\cos(2az)}{2}
\right)  \\
& =
(-1)^m 2^{2m-1} a^{2m} \big(
  4^{m-1} \phi_1(4ai) \cos(4az) + \cosh(2ar)\phi_1(2ai) \cos(2az)\big).
\end{align*}
Let $z$ be a zero of the $\phi(D)g_a(z)$. Then
\begin{align*}
0 & = 2^{2m-2} \phi_1(4ai) \cos(4az) + \cosh(2ar)\phi_1(2ai) \cos(2az) \\
& = 2^{2m-2} \phi_1(4ai) \big(2 \cos^2(2az)-1\big) + \cosh(2ar)\phi_1(2ai) \cos(2az).
\end{align*}
Solving for $\cos(2az)$ results in
\begin{align*}
\cos(2az) & =
\tfrac{-\cosh(2ar)\phi_1(2ai) \pm \sqrt{\cosh^2(2ar)\phi_1^2(2ai) + 2^{4m-1}\phi_1^2(4ai)}}{2^{2m}\phi_1(4ai)}.
\end{align*}
There are two types of solutions depending on the choice of sign.  It will
suffice for us to consider only those solutions corresponding to choosing the
negative sign.  For these solutions,
\begin{equation} \label{inequality:phivanishingat0:1}
0=\cos(2az) +
\tfrac{\cosh(2ar)\phi_1(2ai) + \sqrt{\cosh^2(2ar)\phi_1^2(2ai)
+ 2^{4m-1}\phi_1^2(4ai)}}{2^{2m}\phi_1(4ai)}.
\end{equation}
The fraction expression in~\eqref{inequality:phivanishingat0:1} has
the lower bound:
\begin{align}  \label{inequality:phivanishingat0:2}
& \tfrac{\cosh(2ar)\phi_1(2ai) + \sqrt{\cosh^2(2ar)\phi_1^2(2ai)
+ 2^{4m-1}\phi_1^2(4ai)}}{2^{2m}\phi_1(4ai)}
> \frac{\cosh(2ar) \phi_1(2ai)}{2^{2m-1} \phi_1(4ai)}.
\end{align}

As in Lemma~\ref{lemma:phi:vanishingat0}(a), assume $\phi(z)$ has order
$\rho<1$. The order of $\phi_1(z)$ is the same as that of $\phi(z)$. So,
given $\epsilon>0$ there exists a positive increasing sequence $\{a_n\}$
tending to $\infty$ such that
\[
e^{(2a_n)^{\rho-\epsilon}} < M_{\phi_1}(2a_n) = \phi_1(2a_ni)
\]
for all $n$.  Also for all sufficiently large $a_n$,
\[
M_{\phi_1}(4a_n) = \phi_1(4a_n i) < e^{(4a_n)^{\rho+\epsilon}}.
\]
Assuming $\epsilon$ satisfies $0<\epsilon<\rho<\rho+\epsilon<1$ and letting
$a_n$ be sufficiently large gives
\begin{align}  \label{inequality:phivanishingat0:3}
\frac{\cosh(2a_n r)\phi_1(2 a_n i)}{2^{2m-1} \phi_1(4a_n i)}
&> \frac{e^{2a_n r} e^{(2a_n)^{\rho-\epsilon}}}{2^{2m} e^{(4a_n)^{\rho+\epsilon}}} >1.
\end{align}
This implies that the fraction term in
equation~\eqref{inequality:phivanishingat0:1} is larger than $1$. Hence,
there exists $r_1$ with $0<r_1\leq r$ such that
\begin{align*}
0& =\cos(2 a_n z)+ \tfrac{\cosh(2a_nr)\phi_1(2a_ni) + \sqrt{\cosh^2(2a_nr)\phi_1^2(2a_ni)
+ 2^{4m-1}\phi_1^2(4a_ni)}}{2^{2m}\phi_1(4a_ni)} \\
& = \cos(2a_n z)+\cosh(2a_n r_1) \\
& = 2 \cos(a_n(z-ir_1))\cos(a_n(z+ir_1)).
\end{align*}
Thus $\phi(D) g_{a_n}(z)$ has roots with imaginary part $r_1$. Solving for
$r_1$ and using inequalities~\eqref{inequality:phivanishingat0:2}
and~\eqref{inequality:phivanishingat0:3} gives a lower bound for $r_1$.
\begin{align*}
r_1 & = \frac{1}{2a_n} \cosh^{-1}\left(\tfrac{\cosh(2a_nr)\phi_1(2a_ni) + \sqrt{\cosh^2(2a_nr)\phi_1^2(2a_ni)
+ 2^{4m-1}\phi_1^2(4a_ni)}}{2^{2m}\phi_1(4a_ni)}\right) \\
& > \frac{1}{2a_n} \cosh^{-1}\left(\frac{e^{2a_n r} e^{(2a_n)^{\rho-\epsilon}}}{2^{2m} e^{(4a_n)^{\rho+\epsilon}}}\right) \\
& > \frac{1}{2a_n} \log\left(\frac{e^{2a_n r} e^{(2a_n)^{\rho-\epsilon}}}{2^{2m} e^{(4a_n)^{\rho+\epsilon}}}\right) \\
& = r+\frac{1}{2a_n} \left( (2a_n)^{\rho-\epsilon} -(4a_n)^{\rho+\epsilon}-\log(2^{2m})\right)  \\
& > r- \frac{1}{2a_n} \left( (4a_n)^{\rho+\epsilon}+\log(2^{2m})\right) .
\end{align*}
By choosing $a_n$ sufficiently large, we cause the lower bound on $r_1$ to be
arbitrarily close to $r$. So, $\phi(D)$ is not a CZSDO in the case $\rho<1$,
proving Lemma~\ref{lemma:phi:vanishingat0}(a)

Next, as in Lemma~\ref{lemma:phi:vanishingat0}(b), assume $\phi(z)$ has order
$\rho=1$ and minimal type.  Then the same is true for $\phi_1(z)$. We choose
a small positive $\epsilon$ with $0<\epsilon<\rho=1$. Thus there exists a
positive increasing sequence tending to $\infty$ such that
\[
e^{(2a_n)^{\epsilon}}< M_{\phi_1}(2a_n) = \phi_1(2a_n i)
\]
for all $n$. Also since $\phi_1(z)$ has order $\rho=1$ and minimal type,
\[
M_{\phi_1}(4a_n) < e^{\epsilon(4a_n)}
\]
for all sufficiently large $a_n$.  For small $\epsilon$ and large $a_n$ the
lower bound on the fraction in
inequality~\eqref{inequality:phivanishingat0:2} is then bounded below as
follows:
\begin{align*}
\frac{\cosh(2a_nr) \phi_1(2a_ni)}{2^{2m-1} \phi_1(4a_n i)}
& > \frac{e^{2a_n r} e^{(2a_n)^{\epsilon}}}{2^{2m} e^{\epsilon(4 a_n)}}>1.
\end{align*}
Therefore, similarly to the previous case in which $\rho<1$,
$\phi(D)g_{a_n}(z)$ has roots with imaginary part $r_1>0$ where
\begin{align*}
r_1 & = \frac{1}{2a_n} \cosh^{-1}\left(\tfrac{\cosh(2a_nr)\phi_1(2a_ni) + \sqrt{\cosh^2(2a_nr)\phi_1^2(2a_ni)
+ 2^{4m-1}\phi_1^2(4a_ni)}}{2^{2m}\phi_1(4a_ni)}\right) \\
& > \frac{1}{2a_n} \cosh^{-1}\left(\frac{e^{2a_n r} e^{(2a_n)^{\epsilon}}}{2^{2m} e^{\epsilon(4 a_n)}}\right) \\
& > \frac{1}{2a_n} \log\left(\frac{e^{2a_n r} e^{(2a_n)^{\epsilon}}}{2^{2m} e^{\epsilon(4 a_n)}}\right) \\
& = r+\frac{1}{2a_n} \left((2a_n)^{\epsilon}-\log(2^{2m}) - 4 \epsilon a_n\right) \\
& > r-4\epsilon-\frac{\log(2^{2m})}{2a_n}.
\end{align*}
If $\epsilon$ is sufficiently small and $a_n$ is sufficiently large, the
lower bound on $r_1$ can be made to be arbitrarily close to $r$. This shows
that $\phi(D)$ is not a CZSDO when $\phi(z)$ has order $\rho=1$ and has
minimal type, proving Lemma~\ref{lemma:phi:vanishingat0}(b).
\end{proof}

Combining Lemmas~\ref{lemma:evenphinotCZSDO} and~\ref{lemma:phi:vanishingat0}
proves Theorem~\ref{theorem:MainTheorem:2} parts~(a) and~(b) in the case
$\phi(0) = 0$.

We have now established Theorem~\ref{theorem:MainTheorem:2} parts~(a)
and~(b). Theorem~\ref{theorem:MainTheorem:2}(c) was actually proved in
Lemma~\ref{lemma:shiftingoperator}. That is, if $\alpha \in \R$, then
$e^{\alpha D}f(z)=f(z+\alpha)$. The operator $e^{\alpha D}$ merely
translates the zeros of $f(z)$ horizontally in the complex plane and does not
reduce the size of the strip $S(r)$ containing the roots. So, $e^{\alpha D}$
is not a CZSDO.

The proof of Theorem~\ref{theorem:MainTheorem:2} is now complete.

\section{Examples and Conjectures} \label{section:examples}

In this section, we give several examples and make some conjectures based on
these examples.

\begin{example}  \label{example:sinandcosine}
In Lemmas~\ref{lemma:cosineandsineoperators} we showed that for $a,b \in \R$
where $a>0$ and for any $f(z) \in \LP(r)$ with $r>0$,
\begin{equation}
\cos(aD+b)f(z)  \in \LP(\sqrt{r^2-a^2})
\end{equation}
and
\begin{equation}
\sin(aD+b)f(z)  \in \LP(\sqrt{r^2-a^2}).
\end{equation}
\end{example}

\begin{example}  \label{example:exponentialsquared}
In Lemma~\ref{lemma:exponentialsquared} we showed that for $a>0$ and any
$f(z) \in \LP_1(r)$ with $r>0$
\begin{equation}
\exp\left(-\frac{a^2 D^2}{2}\right) f(z) \in \LP(\sqrt{r^2-a^2})
\end{equation}
\end{example}

Example~\ref{example:sinandcosine} suggests that if $\phi(z) \in \LP_1$ has a
`lower density' of zeros then $\phi(aD)$ might be a CZSDO. We state this as a
conjecture:
\begin{conjecture} \label{conjecture:lowerdensity}
Let $n(t)$ denote the number of roots of $\phi(z)\in\LP_1$ in the interval
$(-t,t)$. Suppose
\[
\liminf_{t \rightarrow \infty} \frac{n(t)}{t} >0.
\]
Then there exists a positive constant $c_{\phi}$ such that for any $f(z) \in
\LP(r)$ with $r>0$ it follows that
\[
\phi(aD)f(z) \in \LP\left(\sqrt{r^2-c_{\phi} a^2}\right).
\]
\end{conjecture}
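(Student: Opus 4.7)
The plan is to mimic the strategy of Lemmas~\ref{lemma:cosineandsineoperators} and~\ref{lemma:exponentialsquared}, decomposing $\phi(aD)$ as a composition of a strip-shrinking operator---arising from an embedded cosine-like factor---and a strip-preserving operator handled by Lemma~\ref{lemma:differentialoperator}. The positive lower density hypothesis $d=\liminf n(t)/t>0$ should furnish a cosine factor of strength proportional to $d$, yielding a constant $c_\phi$ of order roughly $d^2$.

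I would first reduce to $\phi$ of the form $\phi(z)=c\prod_n(1-z^2/\beta_n^2)$ with $\beta_n>0$: factoring out any $e^{\beta z}$ only translates zeros horizontally by Lemma~\ref{lemma:shiftingoperator}, and passing to $\phi(z)\phi(-z)$ produces an even function whose lower zero density is at least as large as that of $\phi$, while a transfer argument (noting that $\phi(-D)$ preserves $\LP(r)$ by Lemma~\ref{lemma:differentialoperator}) would recover the CZSDO property for $\phi(D)$ from that of $\phi(D)\phi(-D)$ at the cost of a factor in $c_\phi$. Given $d>0$, choose $b<\pi d/2$ so that for all sufficiently large $n$, the $n$-th positive zero of $\cos(bz)$, namely $(n-\tfrac{1}{2})\pi/b$, exceeds $\beta_n$. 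Rather than attempting the exact factorization $\phi(z)=\cos(bz)\cdot\psi(z)$---which fails because the respective zeros do not coincide---I would construct a sequence of even $\phi_N\in\LP_1$ converging uniformly on compact sets to $\phi$, each of which does admit such a factorization $\phi_N(z)=\cos(bz)\cdot\psi_N(z)$ with $\psi_N\in\LP_1$, for instance by perturbing $\beta_n$ for $n\le N$ so as to insert the first $N$ positive zeros of $\cos(bz)$. For each $N$, Lemma~\ref{lemma:cosineandsineoperators} gives $\cos(abD)f(z)\in\LP(\sqrt{r^2-a^2b^2})$ and Lemma~\ref{lemma:differentialoperator} says $\psi_N(aD)$ preserves this smaller class; passing to the limit via Lemma~\ref{lemma:differentialoperator} and Hurwitz's theorem would then confine the zeros of $\phi(aD)f$ to $S(\sqrt{r^2-a^2b^2})$, giving $c_\phi=b^2$.

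The main obstacle is the approximation step: one must produce $\phi_N\to\phi$ such that the zeros of $\cos(bz)$ are literal zeros of each $\phi_N$, while maintaining $\phi_N\in\LP_1$, preserving the lower density bound uniformly in $N$, and keeping the convergence $\phi_N(aD)f\to\phi(aD)f$ uniform on compact sets so Hurwitz applies. A more robust alternative would be to seek an integral representation
\[
\phi(aD)f(z) \;=\; \int f(z+iw)\,d\mu_\phi(w),
\]
analogous to the Gaussian kernel implicit in Lemma~\ref{lemma:exponentialsquared} and to the two-point atomic measure underlying $\cos(aD)$; the lower density hypothesis would then translate, via growth estimates on $\phi$ along the imaginary axis, into quantitative spread of $\mu_\phi$ and hence into strip reduction by the modulus-comparison technique of Lemma~\ref{lemma:cosineandsineoperators}. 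Establishing such a representation for the full $\LP_1$ class with positive type is itself a deep problem, and is likely where the genuine difficulty of the conjecture resides.
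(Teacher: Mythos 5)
The statement you are trying to prove is Conjecture~\ref{conjecture:lowerdensity}: the paper offers no proof of it and explicitly lists its resolution among the open problems, so there is no argument of the author's to compare yours against. Your sketch is a sensible heuristic, and you are candid that the hard part remains, but as written it does not close the gap; two of your steps fail concretely. First, the reduction to even $\phi$ runs Lemma~\ref{lemma:evenphinotCZSDO} in the wrong direction: that lemma shows that if $\phi(D)$ \emph{is} a CZSDO then so is $\phi(D)\phi(-D)$, whereas you need the converse. Knowing $\phi(-D)\phi(D)f\in\LP(r_1)$ constrains the image of $\phi(D)f$ under a further strip-preserving operator, not $\phi(D)f$ itself, so the CZSDO property does not transfer back from the symmetrized operator to $\phi(D)$.

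Second, and more seriously, the approximation step cannot produce what you need. To make every zero $(n-\tfrac12)\pi/b$ of $\cos(bz)$ a literal zero of $\phi_N$, you must displace infinitely many zeros $\beta_n$ of $\phi$ by amounts that are bounded below by a fixed fraction of the gap $\pi/b$ (the density hypothesis only guarantees a nearby zero, not a coincident one), and these displacements do not shrink as $N\to\infty$. Consequently $\phi_N$ does not converge to $\phi$ uniformly on compact sets --- it converges, if at all, to a different function divisible by $\cos(bz)$ --- so Hurwitz's theorem localizes the zeros of $\lim_N\phi_N(aD)f$, which is not $\phi(aD)f$. The exact factorization $\phi=\cos(bz)\psi$ with $\psi\in\LP_1$, which is what the composition argument of Theorem~\ref{theorem:MainTheorem:1} genuinely requires, is simply unavailable for a generic $\phi$ satisfying the density hypothesis. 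Your alternative route via an integral representation $\phi(aD)f(z)=\int f(z+iw)\,d\mu_\phi(w)$ with quantitative control of the spread of $\mu_\phi$ in terms of the zero density is plausible but is itself an unproven (and probably comparably difficult) assertion; the modulus-comparison argument of Lemma~\ref{lemma:cosineandsineoperators} exploits the two-point atomic structure of the measure for $\cos(\alpha D+\beta)$ in an essential way and does not extend automatically. In short: the statement remains a conjecture, and your proposal identifies, but does not overcome, the genuine obstruction.
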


From the proof of Theorem~\ref{theorem:MainTheorem:2} and in light of
Examples~\ref{example:sinandcosine} and~\ref{example:exponentialsquared} we
make another conjecture:

\begin{conjecture}[Classification of $\phi(D)$ that are CZSDOs] \label{conjecture:MainConjecture}
If $\phi(z)\in \LP$ has a Weierstrass canonical product of the form
\[
\phi(z) = c e^{\alpha z-\beta z^2} \prod_n \left(1-\frac{z}{\alpha_n}\right) \exp\left(\frac{z}{\alpha_n}\right),
\]
then $\phi(D)$ is a CZSDO if and only if any one of the following conditions
is satisfied:
\begin{enumerate}
\item[(i)] $\beta>0$.
\item[(ii)] $\beta=0$ and the product $\prod_n(1-z/\alpha_n)e^{z/\alpha_n}$
    has order $\rho=1$ and type $\sigma>0$.
\item[(iii)] $\beta=0$ and the order of the product
    $\prod_n(1-z/\alpha_n)e^{z/\alpha_n}$ satisfies $1<\sigma \leq 2$.
\end{enumerate}
In other words, we would have a complete classification of the functions
$\phi(z)\in\LP$ such that $\phi(D)$ is a CZSDO.
\end{conjecture}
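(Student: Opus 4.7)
The necessity of conditions (i)--(iii) is Theorem~\ref{theorem:MainTheorem:2}, so the plan is to establish sufficiency. Case~(i) is exactly Theorem~\ref{theorem:MainTheorem:1}, so the heart of the matter is cases~(ii) and~(iii), in which $\beta=0$ and the canonical product $\psi(z)=\prod_n(1-z/\alpha_n)e^{z/\alpha_n}$ has either order $1$ with positive type or order strictly greater than $1$. As a first reduction I would write $\phi(z)=e^{\alpha z}\psi(z)$ and observe that by Lemma~\ref{lemma:shiftingoperator} the operator $e^{\alpha D}$ is the translation $f(z)\mapsto f(z+\alpha)$, which has no effect on imaginary parts of zeros; hence $\phi(D)$ is a CZSDO if and only if $\psi(D)$ is, and I may assume $\phi=\psi$ throughout.

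The driving idea is to factor $\psi(D)$ as an infinite composition of simpler CZSDOs whose strip-contracting behaviour is already understood, namely the cosine and sine operators from Example~\ref{example:sinandcosine}, which contract $S(r)$ to $S(\sqrt{r^2-a^2})$, and, in case~(iii), also the Gaussian-type operator from Example~\ref{example:exponentialsquared}. In case~(ii), Cartwright--Levinson theory forces the zeros $\{\alpha_n\}$ to have positive linear density, so one should be able to partition them into finite clusters $B_j$ whose Weierstrass blocks closely approximate trigonometric polynomials $c_j\cos(a_jz+b_j)$ with a uniform positive lower bound on $a_j$. In case~(iii) the density is superlinear, and the same clustering produces blocks approximating $\exp(-a_j^2z^2/2)$. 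Once installed as operators, each block contracts the strip by a definite amount by Lemmas~\ref{lemma:cosineandsineoperators} and~\ref{lemma:exponentialsquared}, and composing all blocks while passing to the limit with the aid of Lemma~\ref{lemma:differentialoperator} should yield the desired uniform reduction $r\mapsto r_1<r$.

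The main obstacle, which is precisely why this remains a conjecture, is that constructing such a factorization requires controlling the exponential corrections $e^{z/\alpha_n}$ in the Weierstrass product. A general $\psi\in\LP_1$ of order~$1$ positive type can have very irregular real zero sequences, and the natural pairing of positive with negative zeros produces defect factors $e^{\lambda z}$ whose linear exponents must be absorbed into auxiliary shift operators without destroying convergence. Beyond the factorization itself, one must verify that the resulting infinite composition of CZSDOs remains well-defined on all of $\LP^{\gamma_1}(r)$ and that the accumulated contractions $\sqrt{r^2-a_j^2}$ combine to a uniform bound strictly below $r$, independent of the test function $f$. I expect the decisive quantitative input to be a factor-by-factor comparison of $|f(z+i\alpha)|^2$ and $|f(z-i\alpha)|^2$ in the spirit of the proof of Lemma~\ref{lemma:cosineandsineoperators}, combined with a Phragm\'en--Lindel\"of type lower bound on $|\psi(2ai)|$ extracted from the order/type hypothesis analogously to the estimates in Lemma~\ref{lemma:phi:nonvanishingat0}, now used to produce actual strip contraction for arbitrary $f$ rather than merely to detect extremal behaviour on the test functions $f_a$.
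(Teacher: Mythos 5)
The statement you are trying to prove is Conjecture~\ref{conjecture:MainConjecture}; the paper offers no proof of it and explicitly lists settling it as an open problem, so there is nothing in the paper to compare your argument against. Your preliminary reductions are sound and do match what the paper's results supply: necessity of (i)--(iii) follows from Theorem~\ref{theorem:MainTheorem:2} (after using Lemma~\ref{lemma:shiftingoperator} to strip off the factor $e^{\alpha z}$, which only translates zeros and so does not affect the CZSDO property), and sufficiency of (i) is Theorem~\ref{theorem:MainTheorem:1}. But the entire content of the conjecture is the sufficiency of (ii) and (iii), and there your proposal is a plan rather than a proof, as you yourself concede.

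The gap is concrete. First, the factorization of a general canonical product $\psi(z)=\prod_n(1-z/\alpha_n)e^{z/\alpha_n}$ of order $\rho=1$, type $\sigma>0$ (or of order $\rho>1$) into blocks that are \emph{exactly} of the form $c_j\cos(a_jz+b_j)$ or $\exp(-a_j^2z^2/2)$ is not available; at best the clustered blocks \emph{approximate} such factors, and Lemmas~\ref{lemma:cosineandsineoperators} and~\ref{lemma:exponentialsquared} apply only to the exact operators, not to perturbations of them. An approximation scheme would have to show that each approximating operator, applied to an arbitrary $f\in\LP^{\gamma_1}(r)$, already lands in a \emph{fixed} $\LP(r_1)$ with $r_1<r$ uniformly in the approximation parameter before Hurwitz's theorem can be invoked in the limit; nothing in the paper provides such uniformity, and this is exactly where Conjecture~\ref{conjecture:lowerdensity} (itself unproven) would be needed. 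Second, your appeal to a positive linear density of the $\alpha_n$ in case (ii), and to the handling of the compensating factors $e^{z/\alpha_n}$ (which for one-sided zero sequences accumulate into genuinely unbounded linear exponents, not a single absorbable shift), is asserted rather than established. Third, the quantitative estimates of Lemmas~\ref{lemma:phi:nonvanishingat0} and~\ref{lemma:phi:vanishingat0} run in the wrong direction for your purposes: they lower-bound the imaginary parts of zeros of $\phi(D)f_a$ for the extremal test functions $f_a$, $g_a$, which can only disprove the CZSDO property; they give no upper bound on imaginary parts of zeros of $\phi(D)f$ for arbitrary $f$. In short, the proposal correctly isolates where the difficulty lies but does not close it, and the statement remains a conjecture.
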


\begin{example}(Simple Zeros)
In~\cite{Cardon2005a}, Cardon and de Gaston showed that if $\phi_1(z) \in
\LP_1$ has infinitely many zeros and if $f(z) \in \LP$ then $\phi_1(D) f(z)$
has only simple real zeros. This guarantees that if $\phi(z)=e^{-\alpha^2
z^2/2}\phi_1(z)$ and $\alpha \geq r$, then for any $f(z) \in
\LP^{\alpha^2/2}(r)$, the function $\phi(D)f(z)$ has only simple real zeros.
\end{example}

\begin{example}[Lower bound on $r'$]
Let $\phi(z) \in \LP$ where $\phi(z)$ is not of the form $c e^{\alpha z}$. By
computing the derivative of the logarithmic derivative and using the product
representation in equation~\eqref{eqn:LaguerrePolyaWeierstrassProduct} and if
$z$ is not a root of $\phi$ we obtain
\[
\left(\frac{\phi'(z)}{\phi(z)}\right)'=\frac{\phi''(z)\phi(z)-(\phi'(z))^2}{(\phi(z))^2}
= - \frac{1}{z^2} - \beta - \frac{1}{(z-\alpha_k)^2} < 0.
\]
Then for $z$ not a root
\begin{equation} \label{inequality:Laguerre}
[\phi'(z)]^2-\phi(z)\phi''(z)>0.
\end{equation}
This is called \textit{Laguerre's inequality}.  Suppose $\phi(0)=1$.  Then the
constant $b_{\phi}$ defined by
\[
b_{\phi}=[\phi'(0)]^2-\phi(0)\phi''(0)=[\phi'(0)]^2-\phi''(0)
\]
is positive.

Letting $\phi(aD)$ act on $z^2+r^2$ where $a>0$ and $r>0$ gives
\begin{align*}
\phi(aD)(z^2+r^2) & = \sum_{k=0}^{\infty} \tfrac{a^k \phi^{(k)}(0)}{k!}  \tfrac{d^k}{z^k}(z^2+r^2) \\
& = \phi(0)(z^2+r^2) + a \phi'(0)(2z)+\tfrac{a^2}{2}\phi''(0) (2) \\
& = z^2 + 2a\phi'(0) z +r^2 + a^2\phi''(0) \\
& = (z+a\phi'(0))^2 + r^2 -  \big([\phi'(0)]^2-\phi''(0)\big)a^2 \\
& = (z-a\phi'(0))^2 + r^2-b_{\phi} a^2.
\end{align*}
The roots of $\phi(aD)(z^2+r^2)$ belong to the strip
$S(\sqrt{r^2-b_{\phi}a^2})$ but they do not belong to any smaller strip.

The calculation of $\phi(aD)(z^2+r^2)$ implies that $\phi \in \LP_1$,
$\phi(0) \neq 0$, and $\phi(D)$ is a CZSDO that maps $\LP(r)$ into $\LP(r')$
with $0 \leq r'_a < r$, we have a lower bound for $r_1$. That is, if $0 \leq
a \leq r/b_{\phi}^{1/2}$, then
\[
r'_a \geq \sqrt{r^2 - b_\phi a^2}.
\]
\end{example}

In all explicit examples of CZSDOs in the paper involving the differential
operator $\phi(D)$, we see that the narrowing of the strip $S(r)$ to $S(r')$
involved constants $b_\phi$ and $c_\phi$ where
\[
\sqrt{r^2 - b_\phi a^2} \leq r' \leq \sqrt{r^2 - c_\phi a^2}.
\]

\begin{conjecture}
Assume $\phi(D)$ is any CZSDO where $\phi(z) \in \LP$ and let $r>0$ and let
$a>0$.  Then there exists positive constants $b_{\phi}\geq c_{\phi}$ such
that for any $f(z) \in \LP_1(r)$ it follows that  $\phi(aD)f(z) \in
\LP(r'_a)$ where
\[
\sqrt{r^2 - b_\phi a^2} \leq r'_a \leq \sqrt{r^2 - c_\phi a^2}
\]
and where we replace the lower or upper bounds by $0$ whenever the expression under
the radical is negative.
\end{conjecture}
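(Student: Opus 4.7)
The plan is to establish the two inequalities separately. The lower bound $r'_a \geq \sqrt{r^2 - b_\phi a^2}$ is proved by exhibiting an explicit extremal $f \in \LP_1(r)$ whose image under $\phi(aD)$ has zeros at the target height; the upper bound $r'_a \leq \sqrt{r^2 - c_\phi a^2}$ is a universal quantitative shrinkage statement that must hold for \emph{every} $f \in \LP_1(r)$.

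For the lower bound, multiply $\phi$ by a constant to arrange $\phi(0) = 1$ (when $\phi(0) \neq 0$) or $\phi(z) = z^{2m}\phi_1(z)$ with $\phi_1(0) = 1$ (when $\phi(0) = 0$). In the first case the computation already carried out in the example headed \emph{Lower bound on $r'$} yields
\[
\phi(aD)(z^2 + r^2) = (z + a\phi'(0))^2 + r^2 - b_\phi a^2,
\qquad b_\phi := [\phi'(0)]^2 - \phi''(0),
\]
with $b_\phi > 0$ by Laguerre's inequality. The zeros of the right-hand side have imaginary parts $\pm\sqrt{r^2 - b_\phi a^2}$ whenever the radicand is positive, forcing $r'_a \geq \sqrt{r^2 - b_\phi a^2}$; when the radicand is nonpositive the zeros are real and the convention $r'_a \geq 0$ is trivially met. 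For the case $\phi(0) = 0$, replace the test polynomial by $(z^2 + r^2)^{m+1} \in \LP_1(r)$: the $2m$ derivatives coming from the $z^{2m}$ prefactor of $\phi$ reduce it to a quadratic, and the analogous leading-order expansion of $\phi_1(aD)$ applied to that quadratic produces a $b_\phi$ expressible through the first few nonzero Taylor coefficients of $\phi$.

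For the upper bound, argue by case analysis mirroring Conjecture~\ref{conjecture:MainConjecture}. If $\phi(z) = e^{-\alpha^2 z^2/2}\phi_1(z)$ with $\alpha > 0$, the rescaling $\phi(az)$ retains a Gaussian factor $e^{-\alpha^2 a^2 z^2/2}$; Theorem~\ref{theorem:MainTheorem:1} applied to the operator $\phi(aD)$ gives $\phi(aD)f \in \LP(\sqrt{r^2 - \alpha^2 a^2})$, so $c_\phi = \alpha^2$ works. In the remaining non-Gaussian CZSDO cases --- $\phi \in \LP_1$ of order exactly $1$ with positive type, or of order strictly greater than $1$ --- the strategy is to approximate $\phi$ by truncated Weierstrass partial products $\phi_N$ and, for each $\phi_N$, compare the operator $\phi_N(aD)$ with a product of cosine operators $\cos(aD/\alpha_n)$ to which Lemma~\ref{lemma:cosineandsineoperators} applies, in the spirit of the proof of Lemma~\ref{lemma:exponentialsquared}. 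The telescoped strip-decrease $a^2 \sum 1/\alpha_n^2$, which is finite and positive thanks to the Weierstrass convergence condition, should yield a positive uniform $c_\phi$ upon passage to the limit $N \to \infty$.

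The main obstacle will be the last step: a non-Gaussian $\phi \in \LP$ cannot be factored as $e^{-\beta^2 z^2/2}\psi$ with $\psi \in \LP$, so the effective Gaussian shrinkage must be extracted at the operator level rather than algebraically. A promising route is a direct modulus comparison in the style of Lemma~\ref{lemma:cosineandsineoperators}: examine $|[\phi(aD)f](x + iy)|^2$ versus $|[\phi(aD)f](x - iy)|^2$ for $y$ slightly less than $r$, expand each copy of $\phi$ through its Hadamard product, and use positivity of type (or order exceeding $1$) to isolate a strictly negative quadratic-in-$a$ contribution. Controlling the truncation error of the infinite Weierstrass product and passing to the limit via Hurwitz's theorem, while preserving a single quantitative constant $c_\phi$ independent of both the truncation and of $f$, is where the bulk of the technical work will lie.
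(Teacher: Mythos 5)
You should first note that the statement you are proving is stated in the paper as a \emph{conjecture}: the paper offers no proof, only the motivating computation $\phi(aD)(z^2+r^2)=(z+a\phi'(0))^2+r^2-b_\phi a^2$ for the lower bound and the special cases covered by Theorem~\ref{theorem:MainTheorem:1} and Lemma~\ref{lemma:cosineandsineoperators} for the upper bound. Your lower-bound argument is essentially the paper's own example, and it is logically sound as far as it goes: a single witness $f(z)=z^2+r^2$ whose image has zeros at height $\sqrt{r^2-b_\phi a^2}$ does force $r'_a\geq\sqrt{r^2-b_\phi a^2}$ (your $\phi(0)=0$ variant with $(z^2+r^2)^{m+1}$ is plausible but the ``analogous leading-order expansion'' is not carried out, and one would need to verify that the resulting constant is positive via a Laguerre-type inequality for the relevant Taylor coefficients).

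The upper bound is where the conjecture actually lives, and your plan for it contains a concrete flaw. A truncated Weierstrass product $\phi_N(z)=c\,e^{\alpha z}\prod_{n\leq N}(1-z/\alpha_n)e^{z/\alpha_n}$ is a polynomial times an exponential, hence has order at most $1$ and, apart from the translation factor, order $0$; by Theorem~\ref{theorem:MainTheorem:2}(a) and (c), \emph{no} such $\phi_N(D)$ is a CZSDO. Consequently there is no per-factor strip decrease to telescope: the quantity $a^2\sum_{n\leq N}\alpha_n^{-2}$ does not measure any actual shrinkage achieved by $\phi_N(aD)$, because each operator $(I-aD/\alpha_n)$ individually produces zero shrinkage (this is exactly the Gauss--Lucas heuristic of \S\ref{section:philosophy}). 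Lemma~\ref{lemma:cosineandsineoperators} succeeds only because $\cos(\alpha z+\beta)$ is itself an infinite product whose zeros have positive linear density, so the comparison you propose between $\phi_N(aD)$ and products of cosine operators has no valid direction of inequality. The strip decrease, if it exists, emerges only in the limit $N\to\infty$, and extracting a uniform positive constant $c_\phi$ that survives that limit is precisely the open content of Conjecture~\ref{conjecture:lowerdensity} and of this conjecture; your final paragraph candidly identifies this as the unresolved obstacle, which means the proposal is a research plan rather than a proof.
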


\begin{example}(Multiplier Sequences)
In this paper we defined CZSDOs fairly narrowly because we restricted
ourselves to the case of CZSDOs of the form $\phi(D)$ for $\phi \in \LP$. It
is easy to find examples of CZSDOs that are not of the form $\phi(D)$ where
$\phi \in \LP$ by considering multiplier sequences known to preserver the
reality of zeros. We recall that a sequence of numbers
\[
\gamma_0, \gamma_1, \gamma_2,\ldots
\]
in called a \textit{multiplier sequence of the first kind} if for any
polynomial
\[
f(z) = a_0 + a_1 z + \cdots + a_n z^n
\]
having only real zeros, the new polynomial
\begin{equation} \label{equation:multipliersequence}
\Gamma[f(z)]= \gamma_0 a_0 + \gamma_1 a_1 z + \cdots \gamma_n a_n z^n
\end{equation}
also has only real zeros. Important classical results about multiplier
sequences were proved by P\'olya and Schur.  For a nice discussions about the
classical results see Schmeisser~\cite{RahmanSchmeisser2002} Section 5.7 and
Levin~\cite{Levin1980} Chapter VIII Section 3. Multiplier sequences are still
a topic of investigation as can be seen in papers of Craven, Csordas, and
Fox~\cite{Craven1996,Craven2004,Craven2006}, just to name a few. Consider the
multiplier sequence given by $\Gamma=\{\gamma_k\}=\{\alpha^k\}$ where
$1<\alpha$.  If for $r>0$,
\[
f(z) = \sum_{n=0}^{\infty} a_n z^n \in \LP(r)
\]
then
\[
\Gamma[f(z)] = \sum_{n=0}^{\infty} a_n (\alpha z)^n \in \LP(r/\alpha).
\]
So, this is an example of a CZSDO. However, if $0<\alpha<1$, then this
multiplier sequence increases the size of the strip containing the zero and
is not a CZSDO.
\end{example}

\section{Questions for further study} \label{section:futherstudy}

In addition to the conjectures stated in the previous section, we end this
paper with several open problems concerning complex zero strip decreasing
operators.

\begin{openproblem}
Completely classify the CZSDOs of the form $\phi(D)$, which is the type
studied in this paper. This might be done by proving
Conjecture~\ref{conjecture:MainConjecture}.
\end{openproblem}

\begin{openproblem}
Completely classify the CZSDOs resulting from $\Gamma$-sequences as in
equation~\eqref{equation:multipliersequence}.
\end{openproblem}

\begin{openproblem}
Completely classify CZSDOs when the space being acted on is the space of all
real polynomials whose roots belong to the region $\Omega=S(r)$ for $r>0$.
Such a classification might be in the style of Borcea and Br\"and\'en's
classification in~\cite{BorceaBranden2009c}, which was mentioned in the
introduction to the paper.

\end{openproblem}

\section{Acknowledgment}
I wish to thank my PhD advisor Daniel Bump who introduced me to many
beautiful ideas of analytic number theory and automorphic forms. As I was
writing my PhD thesis, he showed me P\'olya's 1926 paper~\cite{Polya1926} on
the Riemann zeta function. An idea from that paper sparked my interest and is
the genesis for this paper as well as several of my previous papers.  I also
acknowledge George Csordas and Tom Craven for their very interesting papers
which have inspired me to work on problems in this area.

\bibliographystyle{amsplain}

\end{document}